\newcounter{alphthm}
\newtheorem{thm}{Theorem}[section]
\newtheorem{lem}[thm]{Lemma}
\newtheorem{cor}{Corollary}[section]
\theoremstyle{definition}
\newtheorem{rem}{Remark}[section]
\newcommand{\be}{\begin{equation}}
\newcommand{\ee}{\end{equation}}
\newcommand{\pa}{{\partial}}
\newcommand{\g}{{\bf g}}
\newcommand{\pxi}{{\pa \over \pa x^i}}
\title{On Non-Positively Curved Homogeneous Finsler Metrics}
\author{B. Najafi and A. Tayebi}
\numberwithin{equation}{section}
\begin{document}
\maketitle

\begin{abstract}
In this paper, we prove two rigidity results for non-positively curved homogeneous Finsler metrics. Our first main result yields an extension of Hu-Deng's well-known result proven for the Randers metrics. Indeed, we prove that every connected homogeneous Finsler space with non-positive flag curvature and isotropic S-curvature is Riemannian or locally Minkowskian.   We extend the Szab\'{o}'s rigidity theorem for Berwald surfaces and show that homogeneous isotropic Berwald metrics with non-positive flag curvature are Riemannian or locally Minkowskian. We prove that a homogeneous $(\alpha, \beta)$-metrics  has isotropic mean Berwald curvature if and only if it has vanishing mean Berwald curvature generalizing result previously only known in the case of Randers metrics. Our second main result is to show that every homogeneous $(\alpha,\beta)$-metric with non-positive flag curvature and almost isotropic S-curvature  is Riemannian or locally Minkowskian. \\\\
{\bf {Keywords}}: Homogeneous Finsler metric, flag curvature, isotropic Berwald metric, $(\alpha,\beta)$-metric, E-curvature, S-curvature.\footnote{ 2000 Mathematics subject Classification: 53B40,53C60.}
\end{abstract}
\section{Introduction}
In Riemannian geometry, there is only one curvature derived from the Levi-Civita connection, namely the Riemannian curvature. It defines the notion of sectional curvature, which is a way to describe the curvature of Riemannian manifolds whose dimensions are greater than two. Finslerian geometry is the most natural generalization of Riemannian geometry. In the Finslerian setting, there are three curvatures derived from a Finsler connection, which are functions not merely of position but also direction. The notion of flag curvature obtained from  the hh-curvature of the Finslerian connection is a natural extension of the sectional curvature in Riemannian geometry, which tells us how curved is the Finsler manifold at a point. Alternatively, flag curvatures can be treated as Jacobi endomorphisms.

The world of Riemannian manifolds is single-colored, while the world of Finsler manifolds is like a crayon box with unlimited colors. This beauty stems from the nature of the Finsler metrics and the other two non-Riemannian curvatures of their connections. Indeed, besides the Riemannian curvature and its family, there are many interesting and essential non-Riemannian curvatures that vanish for Riemannian metrics. These non-Riemannian quantities describe the ``color'' and its rate of change over the manifold, such as the Cartan torsion ${\bf C}$, the Berwald curvature ${\bf B}$, the mean Berwald curvature ${\bf E}$, the Landsberg curvature ${\bf L}$, the mean Landsberg curvature ${\bf J}$, and the S-curvature ${\bf S}$.

One of the central problems in Finsler geometry is studying and classifying Finsler metrics of non-positive flag curvature. There are some elegant global rigidity results for the  Finsler metrics with ${\bf K} \leq 0$. The most well-known result regarding this case is the contribution of Akbar-Zadeh. He proved that every closed Finsler manifold with negative constant flag curvature ${\bf K}<0$ must be Riemannian, and every closed Finsler manifold with ${\bf K}=0$ must be locally Minkowskian \cite{AZ}. In \cite{MS}, Mo-Shen showed that every compact Finsler manifold of dimension $\geq 3$ and negative scalar curvature is a  Randers metric. The problem of Finsler metrics with non-positive flag curvature in Finsler geometry is well-studied. However, up to now, very little attention has been paid to the subject of homogeneous Finsler metrics. A Finsler manifold $(M, F)$ is said to be homogeneous if  its group of isometries  acts transitively on $M$. In \cite{HD}, Hu-Deng initiated the study
of homogeneous Finsler manifolds of positive flag curvature. They obtained  a  classification of homogeneous Randers metrics with isotropic S-curvature and positive flag curvature. In \cite{DH1}, Deng-Hu proved that a homogeneous Finsler manifold with non-positive flag curvature and negative Ricci scalar is a simply connected manifold. In \cite{DH2}, Deng-Hu classified homogeneous Finsler manifolds of positive flag curvature.

In \cite{H}, Heintze classified the class of homogeneous Riemannian manifolds with negative sectional curvature. Thus, it is natural to consider the class of non-positively curved homogeneous Finsler metrics. The first step to this problem was taken by Hu-Deng in \cite{HD}, where they showed that every connected homogeneous Randers metric with almost isotropic S-curvature and negative Ricci scalar reduces to a Riemannian metric. It turns out that every connected homogeneous Randers metric with almost
isotropic S-curvature and negative flag curvature is Riemannian. In this paper, we extend this result for general homogeneous Finsler metrics.  More precisely, we prove the following.
\begin{thm}\label{MainTHM1}
Every connected $n$-dimensional homogeneous Finsler space $(M, F)$ with  isotropic S-curvature is Riemannian or locally Minkowskian, provided that $F$ is of non-positive flag curvature for $n=2$ and is of negative flag curvature for $n>2$.
\end{thm}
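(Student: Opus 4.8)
The plan is to exploit the interplay between homogeneity, non-positive flag curvature, and the constraint that isotropic S-curvature places on the non-Riemannian invariants. The guiding principle is that homogeneity upgrades pointwise or infinitesimal rigidity into global rigidity, because any quantity that is controlled at one point is automatically controlled everywhere via the transitive isometry action.

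First I would recall the structure of isotropic S-curvature. By definition, $F$ has isotropic S-curvature when $\mathbf{S} = (n+1)cF$ for some scalar function $c=c(x)$ on $M$. Since the isometry group acts transitively and the S-curvature is an isometric invariant, the function $c$ must in fact be constant on $M$. The next step is to pass to the mean Berwald (E-)curvature, which is essentially the vertical derivative of $\mathbf{S}$: isotropic S-curvature with constant $c$ forces $\mathbf{E}$ to take the special isotropic form, and I would then argue that homogeneity together with the non-positive flag curvature hypothesis forces $c=0$, i.e. the S-curvature vanishes identically. Here the natural tool is an integral or averaging argument: on a homogeneous space one can integrate the relevant curvature identity over a suitable compact orbit or use a maximum-principle-type argument. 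Concretely, there is a standard Finslerian identity relating the divergence of $\mathbf{S}$ along geodesics to the Ricci/flag curvature, and under $\mathbf{K}\le 0$ (resp. $\mathbf{K}<0$) the only homogeneous solution is $c\equiv 0$.

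Once $\mathbf{S}\equiv 0$, the metric has vanishing S-curvature, and then I would invoke the non-positivity of flag curvature to kill the remaining non-Riemannian curvatures. The cleanest route is to show that vanishing S-curvature plus $\mathbf{K}\le 0$ on a homogeneous space implies the metric is a Berwald metric, after which a structure theorem for homogeneous Berwald spaces applies: a homogeneous Berwald metric is either Riemannian or locally Minkowskian. For $n>2$ the strict negativity $\mathbf{K}<0$ should rule out the locally Minkowskian (flat) case entirely, forcing the Riemannian conclusion, whereas for the surface case $n=2$ the flat alternative genuinely survives, which is exactly why the hypothesis is weakened to $\mathbf{K}\le 0$ there. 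The dimensional split in the statement is therefore a reflection of how much room the curvature sign leaves for flatness.

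The main obstacle I expect is the passage from isotropic S-curvature to \emph{vanishing} S-curvature, and then from vanishing S-curvature to the Berwald (or Riemannian) property, \emph{without} assuming compactness of $M$ itself. Classical rigidity results such as Akbar-Zadeh's require a \emph{closed} manifold so that one can integrate by parts, but a homogeneous Finsler space need not be compact. The key idea to overcome this is that homogeneity provides a compact \emph{isometry-orbit} structure and a constant (rather than merely bounded) value of $c$, which substitutes for global integrability: instead of integrating over $M$ one uses the transitivity of $\mathrm{Isom}(M,F)$ to propagate a pointwise bound, or one restricts attention to the tangent space at a single point where all invariants become algebraic. I anticipate that the delicate point is handling the degenerate directions where $\beta$-type terms vanish and ensuring the flag-curvature inequality is genuinely used in the strict form for $n>2$; getting the sign conventions and the exact form of the E-curvature/flag-curvature identity right will be where the real work lies.
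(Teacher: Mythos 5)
Your outline has genuine gaps at the two places where the actual work happens. First, the reduction from isotropic S-curvature to $\mathbf{S}\equiv 0$ does not need any curvature hypothesis, averaging argument, or maximum principle: it is a quoted result (Corollary 4.3 of Hu--Deng, \emph{Killing fields and curvatures of homogeneous Finsler manifolds}) that a homogeneous Finsler metric with $\mathbf{S}=(n+1)cF$ automatically has $\mathbf{S}=0$. More seriously, you never identify the mechanism by which non-positive flag curvature kills the non-Riemannian data. The paper's engine is the identity $I^i_{\ |p|q}y^py^q=-R^i_{\ m}I^m$ (obtained from the Mo--Shen/Chen--Mo--Shen formulas once $\mathbf{S}=0$), combined with two facts special to homogeneous spaces: completeness (so geodesics are defined on all of $\mathbb{R}$) and boundedness of the norm of any isometry-invariant tensor. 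One then shows $\Psi(t)=\|\mathbf{I}\|_{(\sigma(t),\dot\sigma(t))}$ satisfies $[\Psi^2]''\ge 0$ when $\mathbf{K}\le 0$, and a bounded smooth function on $\mathbb{R}$ with non-negative second derivative is constant; this forces $\mathbf{J}=0$. Your proposal gestures at ``propagating a pointwise bound'' but contains no identity and no differential inequality, so the curvature sign is never actually used.

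Second, your intermediate claim that $\mathbf{S}=0$ plus $\mathbf{K}\le 0$ on a homogeneous space implies the metric is Berwald, followed by ``a homogeneous Berwald metric is either Riemannian or locally Minkowskian,'' is not a valid route. The dichotomy ``Berwald $\Rightarrow$ Riemannian or locally Minkowskian'' is Szab\'o's theorem for \emph{surfaces}; in dimension $n>2$ Berwald spaces have a much richer structure and no such dichotomy holds without further hypotheses. Correspondingly, the paper only reaches the Berwald condition in the case $n=2$, where the Berwald frame expresses $B^i_{\ jkl}$ through two scalars $\mu,\lambda$ that are killed by $\mathbf{J}=0$ and $\mathbf{E}=0$ respectively. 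For $n>2$ the paper never proves $F$ is Berwald: it instead runs a second bounded-convexity argument on $\tilde f=F^2g^{ij}f_{,i}f_{,j}$ with $f=F^2g_y(\mathbf{I}_y,\mathbf{I}_y)$, uses the \emph{strict} inequality $\mathbf{K}<0$ to conclude $f_{,i}=0$ and then $f\equiv 0$, and finishes with Deicke's theorem. This is also why the hypothesis splits by dimension: the strict negativity is needed to extract $\mathbf{I}=0$ from the vanishing of the curvature term, not (as you suggest) merely to exclude the flat alternative at the end.
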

It is natural to consider the non-positive curved homogeneous Finsler manifolds without the restriction imposed on their S-curvature. However, since an explicit formula of the S-curvature of the homogeneous Finsler manifolds has not been obtained, this problem is still open. An interesting question is whether Theorem \ref{MainTHM1} holds for homogeneous Finsler manifold of positive flag curvature. For now, only this can be said that every connected homogeneous Landsberg metric with positive  flag curvature and isotropic S-curvature is Riemannian.

\bigskip

\bigskip

In \cite{DH3}, Deng-Hu proved that a homogeneous Randers metric of Berwald type whose flag curvature is  non-zero everywhere must be Riemannian. Every Finsler metric of isotropic S-curvature has isotropic mean Berwald curvature or equivalently  almost isotropic S-curvature. Then by considering Theorem \ref{MainTHM1} and Deng-Hu's theorem, it is interesting to consider homogeneous Finsler manifolds  with non-positive flag curvature and almost isotropic S-curvature. However, it is challenging to investigate this question for general Finsler metrics. The first step to solve this problem is to consider it for a class of Finsler metrics that are tangible and computable. Restricting our attention to the class of $(\alpha, \beta)$-metrics, we prove the following.
\begin{thm}\label{MainTHM1.5}
Every homogeneous $(\alpha,\beta)$-metric with non-positive flag curvature, and almost  isotropic S-curvature is Riemannian or locally Minkowskian.
\end{thm}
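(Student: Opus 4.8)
The plan is to strip away the non-Riemannian S-curvature in stages, trading the analytic hypothesis of almost isotropic S-curvature for tensorial conditions on $\beta$ and then invoking the rigidity theorems established above. Since the mean Berwald curvature is the vertical Hessian of the S-curvature, $E_{ij}=\tfrac12\,\mathbf{S}_{y^iy^j}$, I would write almost isotropic S-curvature as $\mathbf{S}=(n+1)c(x)F+\eta$ with $\eta$ a closed $1$-form and differentiate twice in $y$: the linear term $\eta$ is annihilated and one obtains $\mathbf{E}=\tfrac{n+1}{2}c(x)F^{-1}\mathbf{h}$, where $\mathbf{h}$ is the angular metric. Thus the first step, valid for any Finsler metric, is that almost isotropic S-curvature forces isotropic mean Berwald curvature.

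Homogeneity enters at the second step. By our characterization of the mean Berwald curvature of homogeneous $(\alpha,\beta)$-metrics, isotropic mean Berwald curvature is equivalent to $\mathbf{E}=0$; comparing with $\mathbf{E}=\tfrac{n+1}{2}c(x)F^{-1}\mathbf{h}$ and using that $\mathbf{h}$ has rank $n-1\geq1$ gives $c\equiv0$, so $\mathbf{S}=\eta$ is a closed $1$-form. I would then use the structure of homogeneous $(\alpha,\beta)$-metrics (assuming $F$ non-Riemannian, else there is nothing to prove): the isometry group preserves the pair $(\alpha,\beta)$, so $(M,\alpha)$ is homogeneous Riemannian, $\beta$ is invariant, and $b:=\|\beta\|_\alpha$ is constant, whence $r_i+s_i=0$. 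Feeding these data into the Cheng--Shen formula for the S-curvature of an $(\alpha,\beta)$-metric and demanding that the result be the rational $1$-form $\eta$ forces the irrational-in-$y$ part to cancel; separating the identity into its rational and irrational components should give $r_{ij}=0$ and $s_i=0$, so that $\beta$ is a Killing form of constant length with $s_i=0$, and then $\mathbf{S}=0$.

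It remains to promote the Killing condition to parallelism, and here the non-positive flag curvature is indispensable: because $\mathbf{S}$ detects $s_{ij}$ only through its trace $s_i$, the vanishing of $\mathbf{E}$ cannot by itself make $F$ a Berwald metric. My plan is a Bochner-type argument for the $\alpha$-dual vector field $X$ of $\beta$: constancy of $\|X\|$ kills the left-hand side of the Killing Bochner identity $\tfrac12\Delta\|X\|^2=|\nabla X|^2-\mathrm{Ric}_\alpha(X,X)$, giving $|\nabla X|^2=\mathrm{Ric}_\alpha(X,X)$ pointwise; extracting $\mathrm{Ric}_\alpha(X,X)\leq0$ from $\mathbf{K}\leq0$ then forces $\nabla X=0$, i.e. $b_{i|j}=0$, so $F$ is Berwald. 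Finally $F$ is a homogeneous Berwald (hence isotropic Berwald) metric with non-positive flag curvature, and our extension of Szab\'{o}'s theorem yields that $F$ is Riemannian or locally Minkowskian; note that this last step, rather than Theorem \ref{MainTHM1}, is what covers the borderline case $\mathbf{K}=0$ in every dimension.

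The main obstacle will be the passage from the Killing condition to parallelism, specifically the transfer of $\mathbf{K}\leq0$ to the Riemannian inequality $\mathrm{Ric}_\alpha(X,X)\leq0$. This is delicate because, away from the Berwald case, the flag curvature of $F$ is not the sectional curvature of $\alpha$ but differs from it by terms quadratic in $s_{ij}$; one must choose the flag judiciously (flagpole along $X$, or a suitable average over the indicatrix) so that the correction terms carry a definite sign and the non-positivity of $\mathbf{K}$ descends to the underlying Riemannian curvature. By comparison the tensorial separation in the second step is routine, but it too must be carried out carefully, including the exceptional Randers-type branch of the defining ODE for $\phi$, in order to make the reduction to $r_{ij}=0$, $s_i=0$ rigorous.
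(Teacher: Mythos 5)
Your first two steps track the paper: twice vertically differentiating $\mathbf{S}=(n+1)cF+\eta$ kills the linear term $\eta$ and gives isotropic $\mathbf{E}$, homogeneity of the $(\alpha,\beta)$-metric then forces $c=0$ and $\mathbf{E}=0$ (this is the content of Theorems \ref{THM1} and \ref{THM2}), and Cheng's characterization carries out the rational/irrational separation you describe, yielding $r_{ij}=0$, $s_i=0$ in the non-Randers branch. The genuine gap is your third step, the promotion of the Killing condition to parallelism. You still need $s_{ij}=0$, and your proposed route --- the Riemannian Bochner identity $\tfrac12\Delta\|X\|^2=|\nabla X|^2-\mathrm{Ric}_\alpha(X,X)$ for the constant-length Killing field $X$ dual to $\beta$, closed by the inequality $\mathrm{Ric}_\alpha(X,X)\le 0$ --- rests entirely on transferring $\mathbf{K}_F\le 0$ to a curvature inequality for the \emph{Riemannian} metric $\alpha$. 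You flag this yourself as the main obstacle, but it is not a technicality to be smoothed over: for a non-Berwaldian $(\alpha,\beta)$-metric the flag curvature differs from the sectional curvature of $\alpha$ by terms quadratic in $s_{ij}$ with no definite sign, no such implication is available, and on a homogeneous space the Bochner identity for a constant-length Killing field merely returns $|\nabla X|^2=\mathrm{Ric}_\alpha(X,X)\ge 0$, which is vacuous without the unproved inequality. As proposed, the argument does not close.

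The paper obtains $s_{ij}=0$ by an entirely different mechanism, and this is the idea your proposal is missing: Lemma \ref{CLem} shows that a homogeneous Finsler metric with $\mathbf{S}=0$ and $\mathbf{K}\le 0$ is weakly Landsberg, via a convexity argument for $\Psi(t)=\|\mathbf{I}\|$ along geodesics using the identity $I^i_{\ |p|q}y^py^q=-R^i_{\ m}I^m$, completeness of homogeneous spaces, and boundedness of invariant tensors. Then Li--Shen's classification of weakly Landsberg $(\alpha,\beta)$-metrics of non-Randers type gives $s_{ij}=0$ (and $r_{ij}=k(b^2a_{ij}-b_ib_j)$), which together with $r_{ij}=0$ makes $\beta$ parallel and $F$ Berwald by Shen's theorem. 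Two further points you elide: the Randers-type branch is not covered by Cheng's and Li--Shen's theorems and needs its own argument (C-reducibility converts $\mathbf{J}=0$ into $\mathbf{L}=0$, and Crampin's theorem that Landsberg plus $\mathbf{E}=0$ implies Berwald finishes); and your Berwald endgame via ``the extension of Szab\'o's theorem'' is circular in dimension $n>2$, since that corollary is itself proved from Theorem \ref{MainTHM1}, which there requires $\mathbf{K}<0$. The paper instead uses Numata's theorem when $\mathbf{K}=0$ and, when $\mathbf{K}<0$, the relation $R^i_{\ m}I^mI_i=0$ from \eqref{JIK*} with $\mathbf{J}=0$ together with Deicke's theorem.
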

Every Berwald metric satisfies ${\bf S}=0$. Thus, Theorem \ref{MainTHM1.5} is a natural extension of the Deng-Hu's result that proved only for Randers metrics.  In \cite{XD2}, Xu-Deng gave a complete classification of positively curved homogeneous $(\alpha, \beta)$-metrics with vanishing S-curvature. Therefore, Theorem
\ref{MainTHM1.5} can be also considered as the complement of the Deng-Hu's works.

\section{Preliminaries}\label{sectionP}
Let $(M, F)$ be an $n$-dimensional Finsler manifold, and $TM$ be its tangent space. We denote the slit tangent space of $M$ by $TM_0$, i.e., $T_xM_0=T_xM-\{0\}$ at every $x\in M$.  The fundamental tensor $\textbf{g}_y:T_xM\times
T_xM\rightarrow \mathbb{R}$ of $F$ is defined by following
\[
\textbf{g}_{y}(u,v):={1 \over 2}\frac{\pa ^2}{\pa s \pa t} \Big[ F^2 (y+su+tv)\Big]|_{s,t=0}, \ \
u,v\in T_xM.
\]
Let $x\in M$ and $F_x:=F|_{T_xM}$. To measure the
non-Euclidean feature of $F_x$, define ${\bf C}_y:T_xM\times T_xM\times
T_xM\rightarrow \mathbb{R}$ by
\[
{\bf C}_{y}(u,v,w):={1 \over 2} \frac{d}{dt}\Big[\textbf{g}_{y+tw}(u,v)
\Big]|_{t=0}, \ \ u,v,w\in T_xM.
\]
The family ${\bf C}:=\{{\bf C}_y\}_{y\in TM_0}$ is called the Cartan torsion. By definition, ${\bf C}_y$ is a symmetric trilinear form on $T_xM$. It is well known that ${\bf{C}}=0$ if and only if $F$ is Riemannian.

Let $(M, F)$ be a Finsler manifold. For  $y\in T_x M_0$, define ${\bf I}_y:T_xM\rightarrow \mathbb{R}$
by
\[
{\bf I}_y(u)=\sum^n_{i=1}g^{ij}(y) {\bf C}_y(u, \partial_i, \partial_j),
\]
where $\{\partial_i\}$ is a basis for $T_xM$ at $x\in M$. The family
${\bf I}:=\{{\bf I}_y\}_{y\in TM_0}$ is called the mean Cartan torsion. By definition, ${\bf I}_y(u):=I_i(y)u^i$, where $I_i:=g^{jk}C_{ijk}$. By Deicke's theorem, every positive-definite Finsler metric
 $F$ is Riemannian if and only if ${\bf I}=0$.

\bigskip

Given a Finsler manifold $(M, F)$, then a global vector field ${\bf G}$ is induced by $F$ on $TM_0$, and in a standard coordinate $(x^i,y^i)$ for $TM_0$ is given by ${\bf G}=y^i {{\partial} / {\partial x^i}}-2G^i(x,y){{\partial}/ {\partial y^i}}$, where $G^i=G^i(x, y)$ are scalar functions on $TM_0$ given by
\be
G^i:=\frac{1}{4}g^{ij}\Bigg\{\frac{\partial^2[F^2]}{\partial x^k
\partial y^j}y^k-\frac{\partial[F^2]}{\partial x^j}\Bigg\},\ \
y\in T_xM.\label{G}
\ee
The vector field ${\bf G}$ is called the spray associated with $(M, F)$.

\bigskip
A natural volume form $dV_F = \sigma_F(x) dx^1 \cdots
dx^n$ of a Finsler metric $F$ on an $n$-dimensional manifold $M$ is defined by
\begin{equation}\label{sigma}
\sigma_F(x) := {{\rm Vol} (\Bbb B^n)
\over {\rm Vol} \Big \{ (y^i)\in \mathbb{R}^n \ \Big | \ F \big ( y^i
\pxi|_x \big ) < 1 \Big \} },
\end{equation}
where $\Bbb B^n=\{y\in\mathbb{R}^n|\,\, |y|<1\}$. The S-curvature is defined by
\[
 {\bf S}({\bf y}) := {\pa G^i\over \pa y^i}(x,y) - y^i {\pa \over \pa x^i}
\Big [ \ln \sigma_F (x)\Big ],
\]
where ${\bf y}=y^i\partial/\partial x^i|_x\in T_xM$. We say $F$ has almost isotropic S-curvature if
\[
{\bf S}= (n+1) c F+\eta
\]
where $c= c(x)$ is a scalar function  and $\eta=\eta_i(x)y^i$ is a closed 1-form on $M$. If $\eta=0$, then $F$ has isotropic S-curvature.

\newpage

For $y \in T_xM_0$, define ${\bf B}_y:T_xM\times T_xM \times T_xM\rightarrow T_xM$ by ${\bf B}_y(u, v, w):=B^i_{\ jkl}(y)u^jv^kw^l{{\partial } \over {\partial x^i}}|_x$ where
\[
B^i_{\ jkl}:={{\partial^3 G^i} \over {\partial y^j \partial y^k \partial y^l}}.
\]
The quantity $\bf B$ is called the Berwald curvature of the Finsler metric $F$. We call a Finsler metric $F$ a Berwald metric,  if ${\bf{B}}=0$. Moreover, $F$ is called of isotropic Berwald curvature if
\begin{eqnarray}\label{IBCurve}
\nonumber{\bf B}_y(u,v,w)&=& cF^{-1}\Big\{{\bf h}(u,v)\Big (w-\textbf{g}_y(w,\ell)\ell\Big)+{\bf h}(v, w)\Big(u-\textbf{g}_y(u,\ell)\ell\Big)
\\ &&\quad\quad \ \ \ \ \ \ \ \ \ +{\bf h}(w, u)\Big(v-\textbf{g}_y(v,\ell)\ell\Big)+2F{\bf C}_y(u, v, w) \ell\Big\}.
\end{eqnarray}
where $c= c(x)$ is a scalar function on $M$, and ${\bf h}=h_{ij}dx^i\otimes dx^j$ is the angular metric.

\bigskip
Define the mean of Berwald curvature by ${\bf E}_y:T_xM\times T_xM \rightarrow \mathbb{R}$, where
\be
{\bf E}_y (u, v) := {1\over 2} \sum_{i=1}^n g^{ij}(y) g_y \Big ( {\bf B}_y (u, v, e_i ) , e_j \Big ).
\ee
The family ${\bf E}=\{ {\bf E}_y \}_{y\in TM\setminus\{0\}}$ is called the {\it mean Berwald curvature} or {\it E-curvature}.
In a local coordinates,  ${\bf E}_y(u, v):=E_{ij}(y)u^iv^j$, where
\[
E_{ij}:=\frac{1}{2}B^m_{\ mij}.
\]
By definition, ${\bf E}_y(u, v)$ is symmetric in $u$ and  $v$ and we have ${\bf E}_y(y, v)=0$. $\bf E$ is called the mean
Berwald curvature. $F$ is called a weakly Berwald metric if ${\bf{E}}=0$. $F$ is called of isotropic E-curvature, if
\[
{\bf E}= \frac{n+1}{2} c F^{-1}{\bf h},
\]
where $c= c(x)$ is a scalar function  on $M$.

\bigskip

For $y\in T_xM$, define the Landsberg curvature ${\bf L}_y:T_xM\times T_xM \times T_xM\rightarrow \mathbb{R}$ by
\[
{\bf L}_y(u, v,w):=-\frac{1}{2}{\bf g}_y\big({\bf B}_y(u, v, w), y\big).
\]
$F$ is called a Landsberg metric if ${\bf L}_y=0$. By definition, every Berwald metric is a Landsberg metric.

\bigskip
Let $(M, F)$ be a Finsler manifold. For  $y\in T_x M_0$, define ${\bf J}_y:T_xM\rightarrow \mathbb{R}$
by
\[
{\bf J}_y(u)=\sum^n_{i=1}g^{ij}(y) {\bf L}_y(u, \partial_i, \partial_j).
\]
The quntity $\bf J$ is called the mean Landsberg curvature or J-curvature of Finsler metric $F$.
A Finsler metric $F$ is called a weakly Landsberg metric if ${\bf J}_y=0$. By definition, every Landsberg metric is a weakly Landsberg metric.
Mean Landsberg curvature can also be defined as following
\[
J_i: = y^m {\pa I_i \over \pa x^m} -I_m {\pa G^m\over \pa y^i} - 2 G^m {\pa I_i \over \pa y^m}.
\]
By definition, we get
\begin{eqnarray*}
{\bf J}_y (u):= {d\over dt} \Big [ {\bf I}_{\dot{\sigma}(t) } \big ( U(t) \big )\Big ]_{t=0},
\end{eqnarray*}
where $y\in T_xM$, $\sigma=\sigma(t)$ is the geodesic with $\sigma(0)=x$, $\dot{\sigma}(0)=y$, and $U(t)$ is a  linearly parallel vector field along $\sigma$ with
$U(0)=u$. The mean Landsberg curvature ${\bf J}_y$ is the rate of change of ${\bf I}_y$ along geodesics
for any $y\in T_xM_0$.

\bigskip
For an arbitrary  non-zero vector $y \in T_xM_{0}$, the Riemann curvature is a  linear
transformation $\textbf{R}_y: T_xM \rightarrow T_xM$ with homogeneity ${\bf R}_{\lambda y}=\lambda^2 {\bf R}_y$,
$\forall \lambda>0$, which is defined by
$\textbf{R}_y(u):=R^i_{k}(y)u^k {\partial / {\partial x^i}}$, where
\be
R^i_{k}(y)=2{\partial G^i \over {\partial x^k}}-{\partial^2 G^i \over
{{\partial x^j}{\partial y^k}}}y^j+2G^j{\partial^2 G^i \over
{{\partial y^j}{\partial y^k}}}-{\partial G^i \over {\partial
y^j}}{\partial G^j \over {\partial y^k}}.\label{Riemannx}
\ee
The family $\textbf{R}:=\{\textbf{R}_y\}_{y\in TM_0}$ is called the Riemann curvature of the Finsler manifold $(M, F)$.

\bigskip

For a flag $P:={\rm span}\{y, u\} \subset T_xM$ with flagpole $y$, the flag curvature ${\bf
K}={\bf K}(x, y, P)$ is defined by
\be
{\bf K}(x, y, P):= {\g_y \big(u, {\bf R}_y(u)\big) \over \g_y(y, y) \g_y(u,u)
-\g_y(y, u)^2 }.\label{FC0}
\ee
The flag curvature ${\bf K}(x, y, P)$ is a function of tangent planes $P={\rm span}\{ y, v\}\subset T_xM$. This quantity tells us how curved space is at a point. A Finsler metric $F$ is of scalar flag curvature if $\textbf{K} = \textbf{K}(x, y, P)$ is independent of flags $P$ containing $y\in T_xM_0$.

\section{Proof of Theorem \ref{MainTHM1}}
In this section, we are going to prove Theorem \ref{MainTHM1}. For this aim, we need the following.

\bigskip

\begin{lem}\label{alaki2}
Let $\Psi:\mathbb{R}\to\mathbb{R}$ be a bounded smooth function. If $\Psi''$ is a non-negative function, then $\Psi$ is a constant function.
\end{lem}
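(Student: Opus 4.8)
The plan is to exploit the fact that the hypothesis $\Psi''\geq 0$ says precisely that $\Psi$ is convex, and that a convex function on all of $\mathbb{R}$ lies above each of its tangent lines. The strategy is then to argue by contradiction: if $\Psi$ were non-constant, some tangent line would have nonzero slope, and this affine lower bound would force $\Psi$ to be unbounded, contradicting the hypothesis.

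First I would record the elementary consequence of $\Psi''\geq 0$: integrating shows that $\Psi'$ is non-decreasing on $\mathbb{R}$, so $\Psi$ is convex. From convexity (equivalently, from the fundamental theorem of calculus together with the monotonicity of $\Psi'$) I obtain the supporting-line inequality
\[
\Psi(x)\ \geq\ \Psi(a)+\Psi'(a)\,(x-a)\qquad\text{for all }x,a\in\mathbb{R},
\]
which is the key estimate. Indeed, for $x\geq a$ one has $\Psi(x)-\Psi(a)=\int_a^x\Psi'(t)\,dt\geq \Psi'(a)(x-a)$ since $\Psi'(t)\geq\Psi'(a)$ there, and the case $x\leq a$ is identical using $\Psi'(t)\leq\Psi'(a)$.

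Next I would suppose, for contradiction, that $\Psi$ is not constant. Then $\Psi'$ is not identically zero, so there exists $a$ with $\Psi'(a)\neq 0$. If $\Psi'(a)>0$, the right-hand side of the supporting-line inequality tends to $+\infty$ as $x\to+\infty$, forcing $\Psi(x)\to+\infty$ and contradicting boundedness; if $\Psi'(a)<0$, the same right-hand side tends to $+\infty$ as $x\to-\infty$, again contradicting boundedness. Hence $\Psi'(a)=0$ for every $a$, and so $\Psi$ is constant.

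I do not expect any genuine obstacle here, as this is a clean real-analysis fact; the only point requiring care is to treat both signs of the nonzero derivative, since a positive slope forces blow-up at $+\infty$ while a negative slope forces blow-up at $-\infty$. The whole argument hinges on the single observation that $\Psi''\geq 0$ promotes $\Psi$ to a convex function, after which boundedness on the full line is incompatible with any nontrivial growth.
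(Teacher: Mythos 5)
Your proof is correct and follows essentially the same route as the paper's: both arguments locate a point where $\Psi'$ is nonzero, use the monotonicity of $\Psi'$ (coming from $\Psi''\geq 0$) to force at least linear growth along a half-line, and contradict boundedness. The only cosmetic differences are that you derive the supporting-line inequality via the fundamental theorem of calculus and treat both signs of the slope explicitly, whereas the paper applies the mean value theorem twice and reduces to the positive-slope case without loss of generality.
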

\begin{proof}
Suppose that $\Psi$ is not a constant function. Then, there exists an interval $(a,b)$ such that $\Psi(a)\neq \Psi(b)$. Without loss of generality, we suppose that $\Psi(a)< \Psi(b)$. By the mean value theorem, there exists $t_0\in(a,b)$ such that
\[
\Psi'(t_0)=\frac{ \Psi(b)-\Psi(a)}{b-a}>0.
\]
Since $\Psi''$ is a non-negative function, for every $t>t_0$ we have $\Psi'(t)\geq\Psi'(t_0)>0$. For any natural number $n$, there exists $t_n>t_0$ such that
\[
\frac{ \Psi(t_0+n)-\Psi(t_0)}{n}=\Psi'(t_n)\geq\Psi'(t_0),
\]
which implies that 
\[
\Psi(t_0+n)\geq \Psi(t_0)+n\Psi'(t_0), \ \  \ \  \forall n\in \mathbb{N}.
\]
This yields that $\Psi$ is not bounded, which is a contradiction. This completes the proof.
\end{proof}

In \cite{LR}, Latifi-Razavi proved that every  homogeneous Finsler manifold is forward geodesically complete. In \cite{TN1}, Tayebi-Najafi improved their result and proved the following.
\begin{lem}{\rm (\cite{TN2})}\label{lem1}
Every homogeneous Finsler manifold is complete.
\end{lem}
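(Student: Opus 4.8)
The plan is to establish a \emph{uniform} positive lower bound on the length of the domain of every unit-speed geodesic, and then to close with a standard extension argument. Write $I(M,F)$ for the isometry group, which by hypothesis acts transitively on $M$. First I would fix a base point $x_0 \in M$ and consider its indicatrix $S_{x_0} := \{ y \in T_{x_0}M : F(y) = 1 \}$, which is a compact subset of $T_{x_0}M$ because $F_{x_0}$ is a norm. For each $y \in S_{x_0}$ the geodesic $\sigma_y$ with $\sigma_y(0) = x_0$ and $\dot\sigma_y(0) = y$ is the integral curve of the spray ${\bf G}$ through $(x_0, y)$, hence exists on some maximal open interval $(-\delta_y, \delta_y)$. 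By smooth dependence of solutions of the geodesic ODE on their initial data together with the compactness of $S_{x_0}$, there is a single $\delta > 0$ with $(-\delta, \delta) \subset (-\delta_y, \delta_y)$ for every $y \in S_{x_0}$; that is, every unit-speed geodesic issuing from $x_0$ is defined at least on $(-\delta, \delta)$.

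Next I would globalize this bound using homogeneity. Given any $x \in M$, transitivity furnishes an isometry $\phi \in I(M,F)$ with $\phi(x_0) = x$. Since $\phi$ preserves $F$, the linear isomorphism $d\phi_{x_0} : T_{x_0}M \to T_xM$ carries $S_{x_0}$ onto $S_x$, and $\phi$ maps unit-speed geodesics to unit-speed geodesics. Hence for any $v \in S_x$, writing $v = d\phi_{x_0}(y)$ with $y \in S_{x_0}$, the curve $\phi \circ \sigma_y$ is the unit-speed geodesic from $x$ with velocity $v$, and it shares the domain of $\sigma_y$. Therefore every unit-speed geodesic starting at $x$ is defined at least on $(-\delta, \delta)$ with the \emph{same} $\delta$, independent of the point $x$.

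With this uniform $\delta$ in hand the extension argument is routine. Let $\sigma : [0, \ell) \to M$ be a maximal forward unit-speed geodesic and suppose, for contradiction, that $\ell < \infty$. Apply the uniform bound at the point $p := \sigma(\ell - \delta/2)$ with initial velocity $\dot\sigma(\ell - \delta/2)$: the resulting geodesic is defined on an interval of length $\delta$, and by uniqueness of geodesics it agrees with $\sigma$ on the overlap and extends it past $\ell$, contradicting maximality. Hence $\ell = \infty$, so $F$ is forward complete; the identical argument applied to the backward half of $(-\delta, \delta)$ yields backward completeness. Consequently the geodesic flow is complete, and completeness of $(M,F)$ follows from the Finslerian Hopf--Rinow theorem.

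The main obstacle is the first step: producing the uniform $\delta$. This is exactly where compactness of the indicatrix and continuous dependence on initial conditions are indispensable, and it is the point at which homogeneity does the essential work, transporting a bound obtained at a single base point to the entire manifold. A further subtlety is that $F$ need not be reversible, so forward and backward completeness are a priori distinct notions; the advantage of extracting a two-sided interval $(-\delta, \delta)$ at the outset is precisely that it disposes of both directions simultaneously.
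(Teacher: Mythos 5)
The paper itself offers no proof of this lemma; it is imported from \cite{TN2} (the text only records that it strengthens Latifi--Razavi's forward-completeness result). Your argument is correct and is the standard proof of this fact: compactness of the indicatrix gives a uniform existence time $\delta$ for the spray's integral curves at a base point, transitivity of the isometry group transports that $\delta$ to every point, and the overlap-and-extend step then rules out a finite maximal domain in either time direction, with Hopf--Rinow converting two-sided geodesic completeness into completeness of $(M,F)$. Your closing remark is also the right one to make explicit: the non-reversibility of $F$ is exactly why extracting the two-sided interval $(-\delta,\delta)$ at the outset matters, and it is the point where this statement goes beyond the older forward-completeness result.
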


\bigskip
By definition, every two points of a homogeneous Finsler manifold $(M, F)$ map to each other under an isometry. This causes the norm of  an invariant  tensor  under the isometries of  a homogeneous Finsler manifold is a constant function on $M$, and consequently, it has a bounded norm. Then, we conclude the following.
\begin{lem}{\rm (\cite{TN1})}\label{lem2}
Let $(M, F)$ be a homogeneous Finsler manifold. Then, every invariant tensor under the isometries of $F$ has a bounded norm with respect to $F$.
\end{lem}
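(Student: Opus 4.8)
The plan is to interpret the norm of an invariant tensor as a function on the indicatrix bundle and to exploit compactness of the fibers together with transitivity of the isometry group on the base. Let $G$ denote the full isometry group of $(M,F)$, which by hypothesis acts transitively on $M$, and let $T$ be a tensor, possibly depending on direction, that is invariant under $G$. At each $(x,y)\in TM_0$ its pointwise norm $\|T\|(x,y)$ is computed by contracting with the fundamental tensor $\g_y$; since $\g_{\lambda y}=\g_y$, restricting to $F=1$ yields a continuous function on the indicatrix bundle $SM:=\{(x,y)\in TM_0 : F(x,y)=1\}$. The first observation is that $\|T\|$ is $G$-invariant: any $\phi\in G$ preserves $F$ and hence $\g$, and its natural lift $d\phi$ carries $T$ to itself, so $\|T\|\big(d\phi(x,y)\big)=\|T\|(x,y)$ for every $(x,y)$.

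Next I would fix a reference point $x_0\in M$ and restrict attention to the fiber $S_{x_0}M=\{y\in T_{x_0}M : F(x_0,y)=1\}$. This indicatrix is a compact hypersurface in the finite-dimensional vector space $T_{x_0}M$, and $\|T\|$ is continuous on it; therefore there is a constant $c_0<\infty$ with $\|T\|(x_0,y)\le c_0$ for all $y\in S_{x_0}M$. The remaining step is to transport this bound to every other fiber. Given an arbitrary $(x,y)\in SM$, transitivity furnishes an isometry $\phi\in G$ with $\phi(x_0)=x$; its lift $d\phi$ maps $S_{x_0}M$ diffeomorphically onto $S_xM$, so $y=d\phi(y_0)$ for some $y_0\in S_{x_0}M$. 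Invariance of the norm then gives $\|T\|(x,y)=\|T\|(x_0,y_0)\le c_0$. As $(x,y)$ was arbitrary, $\|T\|\le c_0$ on all of $SM$, which is precisely boundedness of the norm of $T$ with respect to $F$.

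The only genuinely delicate point is that the induced $G$-action on $SM$ need not be transitive: the stabilizer $G_{x_0}$ may fail to act transitively on the indicatrix $S_{x_0}M$, so one cannot directly conclude that $\|T\|$ is constant on the whole bundle. The way around this, and the crux of the argument, is to decouple the base and fiber directions: compactness of a single indicatrix fiber controls the norm in the fiber direction, while transitivity on $M$ propagates that control across the base. In particular the fiberwise supremum $x\mapsto \sup_{y\in S_xM}\|T\|(x,y)$ is a $G$-invariant, hence constant, function on $M$, recovering the assertion made in the paragraph preceding the lemma and yielding the claimed bound.
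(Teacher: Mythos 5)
Your proof is correct and follows essentially the same route as the paper, which disposes of this lemma in one sentence before its statement: transitivity of the isometry group forces the (fiberwise) norm to be constant over $M$, hence bounded. Your version properly supplies the two details the paper glosses over --- that the norm lives on the indicatrix bundle rather than on $M$, and that compactness of a single indicatrix fiber is what makes the fiberwise supremum finite --- but the underlying idea is identical.
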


\bigskip
Here, we prove  a result that is the crucial lemma throughout the paper.

\begin{lem}\label{CLem}
Let $(M, F)$ be a homogeneous Finsler manifold of  isotropic S-curvature. Suppose that $F$ has non-positive flag curvature. Then, $F$ is a weakly Landsberg metric.
\end{lem}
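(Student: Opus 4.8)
The plan is to prove that the mean Landsberg curvature vanishes, i.e. ${\bf J}=0$, by a maximum-principle argument carried out along complete geodesics, with the elementary real-variable Lemma \ref{alaki2} playing the role of the maximum principle. The mean Cartan torsion ${\bf I}$ is a canonical Finslerian invariant, hence is preserved by every isometry of $(M,F)$; by Lemma \ref{lem2} its norm is therefore bounded on $M$, and by Lemma \ref{lem1} every geodesic extends to all of $\mathbb{R}$. Since, by the very definition recorded above, ${\bf J}$ is the rate of change of ${\bf I}$ along geodesics, the tension between $\|{\bf I}\|$ being bounded and ${\bf J}$ measuring its variation is exactly what a convexity estimate should exploit.

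Concretely, fix $x\in M$ and a unit vector $y\in T_xM$, let $\sigma:\mathbb{R}\to M$ be the geodesic with $\sigma(0)=x$, $\dot\sigma(0)=y$, and set
\[
\Psi(t):=\|{\bf I}_{\dot\sigma(t)}\|^2=g^{ij}(\dot\sigma(t))\,I_i(\dot\sigma(t))\,I_j(\dot\sigma(t)).
\]
By Lemma \ref{lem2}, $\Psi$ is a bounded smooth function on $\mathbb{R}$. To differentiate it cleanly I would transport a ${\bf g}_{\dot\sigma}$-orthonormal frame $\{e_a(t)\}$ linearly parallel along $\sigma$; since the $h$-covariant derivative of the fundamental tensor vanishes, parallel transport preserves ${\bf g}_{\dot\sigma}$ and the frame stays orthonormal, so with $I_a(t):={\bf I}_{\dot\sigma(t)}(e_a(t))$ one has $\Psi=\sum_a I_a^2$. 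The defining relation ${\bf J}_y(u)=\frac{d}{dt}[{\bf I}_{\dot\sigma}(U)]_{t=0}$ applied along $\sigma$ gives $I_a'=J_a:={\bf J}_{\dot\sigma}(e_a)$, and a further differentiation gives $J_a'=\dot J_a$, the covariant derivative of the mean Landsberg curvature along $\sigma$. Hence
\[
\Psi''=2\sum_a J_a^2+2\sum_a I_a\,\dot J_a=2\|{\bf J}\|^2+2\langle{\bf I},\dot{\bf J}\rangle.
\]

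The heart of the matter is the lower bound $\langle{\bf I},\dot{\bf J}\rangle\geq0$. For this I would invoke the Finslerian Ricci/Bianchi identity expressing the second covariant derivative $\dot{\bf J}$ along the geodesic through the Riemann curvature acting on the index-raised mean Cartan torsion $I^\sharp$ (components $I^i=g^{ij}I_j$) together with terms built from the mean Berwald curvature; schematically, and up to the sign conventions, $\dot J_i=-R_{ij}I^j+(\text{E-curvature terms})$. Two facts then combine. First, ${\bf I}_y(y)=I_iy^i=0$ forces $I^\sharp$ to be ${\bf g}_y$-orthogonal to the flagpole $y$, so the hypothesis ${\bf K}\leq0$ yields ${\bf g}_y\big(I^\sharp,{\bf R}_y(I^\sharp)\big)=R_{ij}I^iI^j\leq0$ and hence a non-negative curvature contribution $-R_{ij}I^iI^j\geq0$. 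Second, the isotropic S-curvature hypothesis forces the isotropic form ${\bf E}=\tfrac{n+1}{2}cF^{-1}{\bf h}$, under which the remaining E-curvature terms either vanish or contract against ${\bf I}$ to a controlled sign (using $I^ih_{ij}=I_j$ together with ${\bf I}_y(y)=0$). Granting this, $\Psi''\geq2\|{\bf J}\|^2\geq0$.

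Finally, $\Psi$ is bounded with $\Psi''\geq0$, so Lemma \ref{alaki2} gives $\Psi\equiv\text{const}$, whence $\Psi''\equiv0$ and therefore $\|{\bf J}\|\equiv0$ along $\sigma$; in particular ${\bf J}_y=0$. As $x$ and the unit vector $y$ were arbitrary, ${\bf J}=0$ identically, i.e. $F$ is a \emph{weakly Landsberg} metric. The main obstacle is precisely the curvature identity for $\dot{\bf J}$: pinning down the correct sign on the Riemann-curvature term and verifying that the mean Berwald contributions are genuinely neutralized by the isotropic S-curvature assumption is the delicate computational core, and it is exactly the step where both hypotheses—non-positive flag curvature and isotropic S-curvature—are indispensable.
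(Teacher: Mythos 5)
Your overall strategy coincides with the paper's: both arguments run a convexity estimate for the norm of the mean Cartan torsion along complete geodesics (completeness from Lemma \ref{lem1}, boundedness from Lemma \ref{lem2}) and then invoke Lemma \ref{alaki2} to conclude first that the norm is constant and then that $\|{\bf J}\|=0$. Your computation of $\Psi''$, and your use of ${\bf I}_y(y)=0$ together with ${\bf K}\leq 0$ to obtain $-R_{ij}I^iI^j\geq 0$, are exactly the paper's steps; working with $\|{\bf I}\|^2$ instead of $\|{\bf I}\|$ is an immaterial difference.

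The gap is the one you flag yourself, and it is genuine. You leave the second-variation identity schematic, writing $\dot J_i=-R_{ij}I^j+(\text{E-curvature terms})$ and hoping the extra terms are ``neutralized'' by the isotropic form of ${\bf E}$. That is not the mechanism. The precise identity (obtained by combining formula (5) of Mo--Shen with formula (21) of Chen--Mo--Shen) is $J_{k|m}y^m+I_mR^m_{\ k}={\bf S}_{\cdot k|m}y^m-{\bf S}_{|k}$: the correction terms are built from horizontal derivatives of the S-curvature itself, and for ${\bf S}=(n+1)c(x)F$ with $c$ non-constant they involve $c_{|k}$ and carry no a priori sign when contracted with $I^k$, so $\langle{\bf I},\dot{\bf J}\rangle\geq 0$ does not follow from isotropy alone. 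What kills the right-hand side is homogeneity: by Corollary 4.3 of \cite{HD2}, a homogeneous Finsler metric with isotropic S-curvature automatically satisfies ${\bf S}\equiv 0$, and only then does the identity reduce to $I^i_{\ |p|q}y^py^q=-R^i_{\ m}I^m$, giving $\Psi''\geq 2\|{\bf J}\|^2\geq 0$. Without citing (or reproving) that vanishing theorem, the central inequality of your argument is not established.
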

\begin{proof}
Combining formula (5) in \cite{MS} and formula (21) in \cite{CMS}, one can obtain
\be
J_{k|m}y^m + I_mR^m_{\ k} ={\bf S}_{\cdot k|m}y^m - {\bf S}_{| k}, \label{EIILJ}
\ee
which is equivalent to
\be
I^i_{\ | p|q} y^py^q + R^i_{\ m}I^m = g^{ik} \Big \{ {\bf S}_{\cdot k|m}y^m - {\bf S}_{|k} \Big \}. \label{EIILJ*}
\ee
It is proved that every homogeneous Finsler metric of  isotropic S-curvature ${\bf S}=(n+1)cF$ has vanishing S-curvature ${\bf S}=0$ (Corollary 4.3. in \cite{HD2}).  Thus,  \eqref{EIILJ*} and vanishing of the S-curvature imply that $I^i_{\ | p|q} y^py^q + R^i_{\ m}I^m =0$ or equivalently
\be
I^i_{\ | p|q} y^py^q=- R^i_{\ m}I^m. \label{JIK*}
\ee
Suppose that $F$ is a non-Riemannian metric. Then by famous Deicke's theorem $||{\bf I}||_{(x,y)}\neq 0$ at some non-zero tangent vectors $y\in T_xM$. Suppose $y\in T_xM_0$ is such a vector and let $\sigma=\sigma(t)$ be the unit geodesic of $F$ with $\sigma(0)=x$ and $\sigma'(0)=y$. Let us define
\[
\Psi(t):=||{\bf I}||_{(\sigma(t),\sigma'(t))}
\]
and suppose that $ (a,b)$ is the maximal interval on which $\Psi(t)$ is positive.

Taking a horizontal derivation of $\Psi(t)^2=I_i\big(\sigma(t),\sigma'(t)\big)I^i\big(\sigma(t),\sigma'(t)\big)$ along geodesics and using the Chaushi-Schwarz inequality, we get
\begin{eqnarray}
\Psi(t) \Psi'(t)=I_i{\big(\sigma(t), \sigma'(t)\big)}J^i{\big(\sigma(t),\sigma'(t)\big)}\leq ||{\bf I}||_{(\sigma(t),\sigma'(t))}\ ||{\bf J}||_{(\sigma(t), \sigma'(t))}=\Psi(t) \ ||{\bf J}||_{(\sigma(t), \sigma'(t))} \ \ \
\end{eqnarray}
which implies
\begin{eqnarray}
 \Psi'(t)\leq \ ||{\bf J}||_{(\sigma(t),\sigma'(t))}.\label{s11}
\end{eqnarray}
By assumption, ${\bf K}\leq 0$ and (\ref{s11}), we have
\begin{eqnarray}
[ \Psi^2]''(t)&\overset{ \eqref{JIK*}}{=}&2\Big[- R^i_{\ m}{\big(\sigma(t),\sigma'(t)\big)}I^m{\big(\sigma(t),\sigma'(t)\big)}I_i{\big(\sigma(t),\sigma'(t)\big)}+||{\bf J}||_{(\sigma(t),\sigma'(t))}^2\Big]\nonumber\\
&\geq&2||{\bf J}||_{(\sigma(t),\sigma'(t))}^2\geq  2\Psi'(t)^2.\label{aki}
\end{eqnarray}
We obtain that $\Psi''(t) \geq 0$. Using Lemma \ref{alaki2}, we conclude that  $\Psi$ is a constant function and then $\Psi'(t) \equiv 0$. By \eqref{aki} and ${\bf K}\leq 0$, we get $||{\bf J}||_{(\sigma(t),\sigma'(t))}=0$. Due to the arbitrariness of the non-zero vector $y\in T_xM$, it follows that $F$ is a weakly Landsberg metric.
\end{proof}

\noindent{\bf Proof of Theorem \ref{MainTHM1}:} We first deal with Finsler surfaces. The special and useful Berwald frame was introduced and developed by Berwald \cite{B}. Let $(M, F)$ be a two-dimensional Finsler manifold. One can define a local field of orthonormal frame $(\ell^i,m^i)$ called the Berwald frame, where  $\ell^i=y^i/F$, $m^i$ is the unit vector with $\ell_i m^i=0$,   $\ell_i=g_{ij}\ell^i$ and $g_{ij}$ is  defined by $g_{ij}=\ell_i\ell_j+m_im_j$. The Berwald curvature of Finsler surfaces is given by
\begin{equation}\label{1}
B^i_{\ jkl}=F^{-1}(-2I_{,1}\ell^i+I_2m^i) m_j m_k m_l,
\end{equation}
where $I=I(x, y)$ is 0-homogeneous function called the main scalar of Finsler metric and $I_2=I_{,2}+I_{,1|2}$ (see page 689 in \cite{An}). In \cite{TP}, it is  proved that the Berwald curvature of a Finsler surface can be written as follows
\begin{equation}\label{B0}
B^i_{\ jkl}=\mu C_{jkl}\ell^i+\lambda(h^i_j h_{kl}+h^i_k h_{jl}+h^i_l h_{jk}),
\end{equation}
where $\mu:=-{2I_{,1}}/{I}$ and $\lambda:={I_2}/({3F})$ are homogeneous functions on $TM$ of degrees 0 and -1 with respect to $y$, respectively. Taking a trace of (\ref{B0}) yields
\be\label{B2}
E_{jk}=\frac{3}{2}\lambda h_{jk}.
\ee
Contracting (\ref{B0}) with $y_i$ implies that
\be\label{B3b}
L_{jkl}+\frac{\mu}{2}FC_{jkl}=0.
\ee
Multiplying \eqref{B0} with $g^{jk}$ yields
\be\label{B3b}
J_l+\frac{\mu}{2}FI_l=0.
\ee
By Lemma \ref{CLem}, $F$ satisfies ${\bf J}=0$. By putting it in  \eqref{B3b}, we conclude that $F$ is Riemannian or $\mu=0$. On the other hand, by assumption, we have ${\bf S}=0$ and then ${\bf E}=0$. Substituting it in \eqref{B2} gives us  $\lambda=0$. Plugging $\mu=\lambda=0$ in \eqref{B0} implies that $F$ is a Berwald metric. In \cite{Sz}, Szab\'{o} proved that every connected Berwald surface is Riemannian or locally Minkowskian. This completes the proof for 2-dimensional Finsler spaces.

Now, let us consider a Finsler manifold $(M,F)$ with dimension greater than two.  Suppose that ${\bf K}<0$.  In this case, let us consider the scalar function $f:TM\to \mathbb{R}$ defined by  $f(x,y):=F^2(x,y)g_y({\bf I}_y,  {\bf I}_y)$. The scalar function $f$ is a homogeneous function  of degree zero on $TM_0$.

It is known that the Lie derivative of $F$ along the spray of  $F$ vanishes, i.e.,  $\pounds_{{\bf G}}(F)=0$. Therefore, we have
\begin{equation}
\pounds_{{\bf G}}(f)=f_{|s}y^s=F^2I^iI_{i|s}y^s+F^2I^i_{\,\,|s}y^sI_i=2F^2J^iI_i=0,\label{new}
\end{equation}
where we have used ${\bf J}=0$. The relation (\ref{new})  means that $f$ is constant along geodesics of $F$. Using a Ricci identity given in \cite{MS}, we get
\begin{equation}
f_{,p}R^p_{\ i}+f_{,i|p|q}y^py^q=0.\label{new0}
\end{equation}
Let $\phi$ be a local isometry of $F$. It is easy to see that $f$ is invariant under $\phi$, i.e., in a standard local coordinates, we have
\be
f(x^i, y^i) = f\Big(\phi^i(x), y^j\frac{\partial \phi^i}{\partial x^j}\Big).\label{X1}
\ee
Let us put
\[
\bar{x}^i = \phi^i(x), \ \ \  \bar{y}^i =y^j \frac{\partial \phi^i}{\partial x^j}
\]
Thus
\be
f(x^i, y^i)=f(\bar{x}^i, \bar{y}^i)
\ee
Let us define $\phi^i_j:={\partial \phi^i}/{\partial x^j}$. Since $\phi$ is an isometry,  the matrix $(\phi^i_j)$ is invertible. Put $(\psi^i_j):=(\phi^i_j)^{-1}$. We have
\[
\frac{\partial \phi^i}{\partial y^j}=0, \ \ \ \ \frac{\partial \psi^i}{\partial y^j}=0.
\]
Put
\[
g_{ij}:=\frac{1}{2}\frac{\partial ^2 F^2}{\partial y^i \partial y^j}, \ \ \ \  \bar{g}_{ij}:=\frac{1}{2}\frac{\partial ^2 F^2}{\partial \bar{y}^i \partial \bar{y}^j}.
\]
Thus
\begin{eqnarray}
g_{ij}=\bar{g}_{rs}\phi^r_{i}\phi^s_{j}.\label{Xg1}
\end{eqnarray}
Equivalently
\begin{eqnarray}
\bar{g}_{ij}=g_{rs}\psi^r_{i}\psi^s_{j}.\label{Xg2}
\end{eqnarray}
It follows that
\[
\bar{g}^{ij}=g^{pq}\phi^i_p\phi^j_q.
\]
By definition, we have $\bar{y}^i=y^j\phi^i_j$. Thus
\be
\bar{y}_i:=\frac{\partial F}{\partial \bar{y}^i}=y_j\psi^j_i.
\ee
The following holds
\be
\frac{\partial f}{\partial y^i}=\phi^r_i   \frac{\partial f}{\partial \bar{y}^r}\label{T1}
\ee
This means that the tensor filed with components ${\partial f}/{\partial y^i}$ is invariant under isometries of $F$. Thus, it
has bounded norm with respect to $F$. This means that the scalar function
\[
\tilde{f}:=F^2g^{ij}\frac{\partial f}{\partial y^i}\frac{\partial f}{\partial y^j}
\]
is invariant under the isometries of $F$. The scalar function $\tilde{f}$ is a
homogeneous function  of degree zero on $TM_0$.  Let $\sigma: \mathbb{R}\rightarrow M$ be an arbitrary unit speed geodesic of $F$. To avoid clutter, we use the abbreviation  $\tilde{f}(t):=\tilde{f}(\sigma(t),\dot{\sigma}(t))$. Our argument shows that $\tilde{f}(t):\mathbb{R}\to\mathbb{R}$ is a bounded smooth function.
It is easy to see that
\begin{equation}
\tilde{f}''(t)=2f_{,i|p|q}\dot{\sigma}^p\dot{\sigma}^qg^{ij}+2f_{,i|p}f_{,j|q}\dot{\sigma}^p\dot{\sigma}^qg^{ij}.\label{new10}
\end{equation}
Plugging (\ref{new0}) into (\ref{new10}), we get
\begin{equation}
\tilde{f}''(t)=-2R^i_{\ k}f_{,i}f_{,j}g^{jk}+2f_{,i|p}f_{,j|q}\dot{\sigma}^p\dot{\sigma}^qg^{ij}.\label{new11}
\end{equation}
Since $F$ has non-positive flag curvature, then we have $\tilde{f}''(t)\geq 0$. It follows from Lemma \ref{alaki2} that $\tilde{f}$ is a constant function. Thus, $\tilde{f}'$ is zero function, and consequently,
$\tilde{f}''=0$.  It follows form (\ref{new11})
\begin{equation}
R^i_{\ k}f_{.i}f_{.j}g^{jk}=f_{,i|p}f_{,j|q}\dot{\sigma}^p\dot{\sigma}^qg^{ij}=0.\label{new13}
\end{equation}
Negatively curved condition and the arbitrariness of the geodesic $\sigma$ imply that $f_{,i}=0$. It means that $f$ is a function of position.
From (\ref{new}), we get
\[
\frac{\partial f}{\partial x^i}y^i=0.
\]
Thus, ${\partial f}/{\partial x^i}=0$ and as a result
$f$ is a constant. We recall that $I_i={\partial \tau}/{\partial y^i}$. For a fixed point $x_0\in M$, the distorsion attains its extremum on
indicatrix of $F$ at $x_0$. At this point $f$ vanishes, and constancy of $f$ implies that $f=0$. The proof follows from  Deicke's theorem.
\qed

\bigskip

\begin{rem}
With the notation of Theorem \ref{MainTHM1}, suppose that $F$ has vanishing flag curvature.  According to \cite{AZ}, any positively complete Finsler metric with zero flag curvature  must be locally Minkowskian if the first and second Cartan torsions are bounded. For the homogeneous Finsler metrics, the first and second Cartan torsions are bounded. Then in this case, $F$ reduces to a locally Minkowskian metric.
\end{rem}
In \cite{HD}, Hu-Deng proved that every homogeneous Randers metric of isotropic S-curvature and negative flag curvature is Riemmanian. Then Theorem \ref{MainTHM1} is an extension of their result.

\bigskip
In \cite{TN2}, the authors proved that every homogeneous isotropic Berwald metric on a manifold $M$ of dimension $n\geq 3$ is either a Berwald metric or a
Randers metric of Berwald type. Here, we prove the following.
\begin{cor}\label{cor1}
Every connected homogeneous isotropic Berwald manifold with non-positive flag curvature  is Riemannian or locally Minkowskian.
\end{cor}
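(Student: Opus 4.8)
The plan is to force the metric to be Berwald and then finish with Szab\'o's surface theorem \cite{Sz} together with Theorem \ref{MainTHM1}. First I would read off from the definition \eqref{IBCurve} what isotropic Berwald curvature says about the Landsberg and mean Landsberg tensors. Contracting the output of \eqref{IBCurve} with $y$ and using $\g_y(\ell,y)=F$ together with $\g_y\big(w-\g_y(w,\ell)\ell,\,y\big)=0$ (and the analogous identities for $u,v$), the first three bracketed terms vanish and only the Cartan term survives, giving ${\bf L}_y=-cF{\bf C}_y$. Tracing this identity with $g^{ij}$ then yields the \emph{exact} proportionality
\[
{\bf J}_y(u)=-cF\,{\bf I}_y(u).
\]
Since $c=c(x)$ is determined by the Finsler invariant ${\bf B}$, it is preserved by the isometries of $F$, hence constant on the homogeneous manifold $M$; write $c\equiv c_0$.

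The heart of the argument is a first-order ODE along geodesics. Assume $F$ is non-Riemannian, so by Deicke's theorem ${\bf I}_y\neq0$ for some $y\in T_xM_0$; let $\sigma$ be the unit-speed geodesic with $\dot\sigma(0)=y$ and put $\Psi(t):=\|{\bf I}\|_{(\sigma(t),\dot\sigma(t))}$. Differentiating $\Psi^2=I_iI^i$ horizontally along $\sigma$ exactly as in the proof of Lemma \ref{CLem}, but now substituting the exact identity ${\bf J}_y=-c_0F{\bf I}_y$ in place of the Cauchy--Schwarz estimate, gives $[\Psi^2]'=2I_iJ^i=-2c_0\Psi^2$, whence $\Psi^2(t)=\Psi^2(0)e^{-2c_0t}$. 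By Lemma \ref{lem1} the geodesic $\sigma$ is defined on all of $\mathbb{R}$, and by Lemma \ref{lem2} the invariant tensor ${\bf I}$ has bounded norm, so $\Psi$ is bounded on $\mathbb{R}$. A nonzero $c_0$ would make the exponential unbounded, a contradiction; hence $c_0=0$ and ${\bf B}=0$, i.e. $F$ is a Berwald metric. (For $n\geq3$ one may instead reach this conclusion by quoting \cite{TN2}, according to which a homogeneous isotropic Berwald metric is Berwald or a Randers metric of Berwald type, the latter being Berwald as well.)

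It then remains to treat a homogeneous Berwald metric of non-positive flag curvature. For $n=2$, Szab\'o's theorem \cite{Sz} applies directly and shows that $F$ is Riemannian or locally Minkowskian, with no curvature hypothesis needed at all. For $n\geq3$, every Berwald metric satisfies ${\bf S}=0$ and hence has isotropic S-curvature, so Theorem \ref{MainTHM1} applies in the strictly negatively curved case and forces $F$ to be Riemannian, while the flat case, where the flag curvature vanishes, is covered by Akbar-Zadeh's rigidity \cite{AZ} (the Remark above) and yields a locally Minkowskian metric.

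I expect the main obstacle to lie in the second step: securing the \emph{exact} relation ${\bf J}=-cF{\bf I}$, rather than merely the almost isotropic S-curvature that the trace ${\bf E}=\tfrac{n+1}{2}cF^{-1}{\bf h}$ alone provides, is precisely what upgrades the inequality of Lemma \ref{CLem} to a solvable ODE and lets completeness together with boundedness annihilate $c$. A secondary point that needs care is reconciling the non-strict hypothesis ${\bf K}\leq0$ for $n>2$ with the strictly negative hypothesis of Theorem \ref{MainTHM1}; once $F$ is known to be Berwald this is handled by separating the strictly negative case (Theorem \ref{MainTHM1}) from the vanishing case (Akbar-Zadeh), both of which are then available.
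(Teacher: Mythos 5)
Your proposal is correct in its essentials but takes a genuinely different route from the paper. The paper's proof is two lines: it quotes \cite{TR} for the fact that an isotropic Berwald metric has isotropic S-curvature and then invokes Theorem \ref{MainTHM1} wholesale. You instead first prove the stronger intermediate statement that a homogeneous isotropic Berwald metric is actually Berwald, with no curvature hypothesis: contracting \eqref{IBCurve} with $y$ does give ${\bf L}_y=-cF{\bf C}_y$, hence ${\bf J}_y=-cF{\bf I}_y$, and since $c$ is constant by homogeneity the quantity $\Psi^2=\|{\bf I}\|^2$ satisfies $[\Psi^2]'=-2c_0\Psi^2$ along unit-speed geodesics, so completeness (Lemma \ref{lem1}) and boundedness of $\|{\bf I}\|$ (Lemma \ref{lem2}) kill $c_0$. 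This is a clean argument, consistent with (and in dimension $\geq 3$ recovering) the result of \cite{TN2}, and it buys you a curvature-free reduction to the Berwald case; the paper's route is shorter but leans entirely on the external reference \cite{TR} and on the machinery inside Theorem \ref{MainTHM1}. Two small points: when you say $c$ is ``determined by ${\bf B}$'' you should note the degenerate case ${\bf C}\equiv 0$, where $c$ is not determined but $F$ is already Riemannian by Deicke's theorem, so nothing is lost. And both your proof and the paper's share the same residual caveat for $n\geq 3$: Theorem \ref{MainTHM1} requires ${\bf K}<0$ and Akbar--Zadeh requires ${\bf K}\equiv 0$, so the mixed situation where ${\bf K}\leq 0$ vanishes on some flags but not others is not literally covered by either dichotomy; you at least flag this issue explicitly, which the paper does not.
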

\begin{proof}
In \cite{TR}, it is proved that isotropic Berwald metric has isotropic S-curvature. By Theorem \ref{MainTHM1}, we get the proof.
\end{proof}

Also, according to the Szab\'{o}'s theorem, every Berwald manifold is Riemmanian (if ${\bf K}\neq 0$) or locally Minkowsian (if ${\bf K}=0$). Thus the Corollary \ref{cor1} is an extension of his result for homogeneous isotropic Berwald manifolds.

\bigskip

\begin{cor}\label{cor2}
Every connected homogeneous Einstein manifold with non-positive flag curvature is Riemannian or locally Minkowskian.
\end{cor}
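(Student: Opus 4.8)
The plan is to deduce the statement from Theorem \ref{MainTHM1}, in the same spirit as the proof of Corollary \ref{cor1}. Recall that $F$ being Einstein means ${\rm Ric}=(n-1)K(x)F^2$ for some scalar $K=K(x)$ on $M$. Since the Ricci scalar is an isometry invariant and the isometry group acts transitively, homogeneity forces $K$ to be a constant; non-positivity of the flag curvature then gives $K\leq 0$. I would organize the whole argument around the resulting dichotomy $K<0$ versus $K=0$.

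The substantive point is to manufacture the hypothesis of Theorem \ref{MainTHM1} out of the Einstein condition, i.e.\ to show that a homogeneous Einstein metric has isotropic ${\bf S}$-curvature; I expect this to be the \emph{main obstacle}. It is delicate exactly because, as remarked after Theorem \ref{MainTHM1}, no explicit formula for the ${\bf S}$-curvature of a homogeneous Finsler metric is known. The natural attempt is to couple the Einstein condition (constancy of $K$) with the Ricci--mean-Landsberg identity \eqref{EIILJ} in order to constrain both the direction- and position-dependence of ${\bf S}$; should the identities not close by themselves, I would fall back on the structure theory of homogeneous Einstein Finsler spaces. Once ${\bf S}$ is isotropic, Corollary 4.3 of \cite{HD2} upgrades it to ${\bf S}=0$, and Lemma \ref{CLem} yields ${\bf J}=0$, placing us precisely in the setting of the proof of Theorem \ref{MainTHM1}.

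With isotropic ${\bf S}$-curvature established, Theorem \ref{MainTHM1} applies whenever the flag curvature is negative (and, for $n=2$, whenever it is merely non-positive), giving that $F$ is Riemannian; the only care needed for $n>2$ is that Theorem \ref{MainTHM1} asks for strictly negative flag curvature, which is exactly the negatively curved regime. There remains the flat case $K=0$, which I isolate using the constancy of $K$: then ${\rm Ric}\equiv 0$. I would exploit that ${\bf R}_y$ is $g_y$-self-adjoint and annihilates $y$, hence restricts to a self-adjoint operator on the $g_y$-orthogonal complement of $y$; non-positivity of the flag curvature makes this operator negative semidefinite, while its trace is the (now vanishing) Ricci scalar. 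A negative semidefinite self-adjoint operator of zero trace vanishes, so ${\bf R}\equiv 0$ and $F$ is flat. As $F$ is homogeneous it is complete (Lemma \ref{lem1}) with bounded Cartan torsions (Lemma \ref{lem2}), so Akbar-Zadeh's theorem \cite{AZ}, exactly as invoked in the Remark above, forces $F$ to be locally Minkowskian. This completes the plan, with the extraction of isotropic ${\bf S}$-curvature from the Einstein condition being the single point requiring real work.
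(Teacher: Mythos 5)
There is a genuine gap, and you located it yourself: the entire corollary hinges on showing that a (homogeneous) Einstein Finsler metric has isotropic ${\bf S}$-curvature, and you never actually prove this. The paper disposes of it in one line by citing Bao--Robles \cite{BR}, who proved that every Einstein Finsler metric has \emph{constant} ${\bf S}$-curvature; with that in hand, Theorem \ref{MainTHM1} applies immediately. Your proposed substitute --- coupling the constancy of the Einstein scalar $K$ with the identity \eqref{EIILJ} --- does not close: \eqref{EIILJ} relates horizontal derivatives of ${\bf J}$ and of ${\bf S}$ to the Riemann curvature, but it gives no mechanism for extracting the algebraic form ${\bf S}=(n+1)cF$ from ${\rm Ric}=(n-1)KF^2$, and the fallback to ``structure theory of homogeneous Einstein Finsler spaces'' is not an argument. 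Without the Bao--Robles input (or an equivalent), the hypothesis of Theorem \ref{MainTHM1} is simply not available and the proof does not go through.

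Two smaller points. First, your case analysis $K<0$ versus $K=0$, and in particular the flat case (trace of a negative semidefinite self-adjoint operator vanishes, hence ${\bf R}\equiv 0$, then Akbar-Zadeh via completeness and bounded Cartan torsion), is sound and in fact more careful than the paper's one-line proof, which silently ignores that Theorem \ref{MainTHM1} requires \emph{strictly} negative flag curvature when $n>2$. Second, however, your claim that $K<0$ puts you ``exactly in the negatively curved regime'' is too quick: a negative Ricci constant together with ${\bf K}\le 0$ does not force ${\bf K}<0$ in every flag --- the flag curvature could vanish on some flags while the trace stays negative --- so even after securing isotropic ${\bf S}$-curvature you would still need to handle the mixed case before invoking Theorem \ref{MainTHM1} for $n>2$.
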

\begin{proof}
In \cite{BR}, Bao-Robles proved that every Einstein Finsler metric has constant S-curvature. By Theorem \ref{MainTHM1}, we get the proof.
\end{proof}

\bigskip

Finsler metrics of sectional flag curvature are those Finsler metrics whose flag curvatures ${\bf K}(x, y, P)$ only depend on sections $P$.  Trivial examples are Riemannian metrics and isotropically or constantly curved Finsler metrics. In \cite{HS},  Huang-Shen proved that there is no non-trivial Finsler metric of sectional flag curvature.

\begin{cor}
Every homogeneous Finsler metric of non-positive sectional flag curvature and isotropic S-curvature is Riemannian or locally Minkowskian.
\end{cor}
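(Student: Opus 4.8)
The plan is to use the Huang--Shen rigidity theorem to collapse the sectional-flag-curvature hypothesis onto the list of trivial models, and then to feed the resulting metric into Theorem \ref{MainTHM1}. First I would recall that, by the result of Huang--Shen cited in \cite{HS}, a Finsler metric of sectional flag curvature must be one of the trivial examples; that is, $F$ is either Riemannian or of isotropic flag curvature (the constantly curved case being included). If $F$ is already Riemannian there is nothing to prove, so the substance lies in the second alternative.

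Next I would exploit homogeneity to sharpen ``isotropic'' to ``constant.'' When $F$ has isotropic flag curvature, ${\bf K}={\bf K}(x)$ is a scalar function of position alone; since flag curvature is preserved by every isometry of $F$, the transitivity of the isometry group forces ${\bf K}(x)$ to take the same value at each point. Hence $F$ has constant flag curvature ${\bf K}\equiv c$, and the standing hypothesis of non-positive sectional flag curvature gives $c\le 0$.

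It then remains to distinguish cases by the dimension and the sign of $c$. For $n=2$ the value $c\le 0$ is non-positive, so Theorem \ref{MainTHM1} applies verbatim and shows that $F$ is Riemannian or locally Minkowskian. For $n>2$ with $c<0$ the hypothesis of negative flag curvature in Theorem \ref{MainTHM1} is satisfied, and the same conclusion follows. The remaining case $n>2$ with $c=0$ is the one I expect to be the main obstacle, since Theorem \ref{MainTHM1} requires strict negativity when $n>2$. I would dispose of it exactly as in the Remark following Theorem \ref{MainTHM1}: by Akbar-Zadeh's theorem a positively complete Finsler metric with vanishing flag curvature and bounded first and second Cartan torsions is locally Minkowskian, where the positive completeness is furnished by Lemma \ref{lem1} and the boundedness of the torsions by Lemma \ref{lem2}, both of which hold for homogeneous metrics. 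Assembling the cases shows that $F$ is Riemannian or locally Minkowskian.
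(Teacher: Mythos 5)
Your proposal is correct, but it takes a genuinely different and more detailed route than the paper. The paper's proof is a single line: it cites Huang--Shen to assert that non-positive sectional flag curvature implies non-positive scalar flag curvature, and then invokes Theorem \ref{MainTHM1}. You instead use the full Huang--Shen classification (the metric is Riemannian or isotropically/constantly curved), upgrade ``isotropic'' to ``constant'' via transitivity of the isometry group, and then split into cases by dimension and by the sign of the constant ${\bf K}\equiv c$. The payoff of your extra care is that you explicitly handle the case $n>2$, $c=0$ via Akbar-Zadeh's theorem together with Lemmas \ref{lem1} and \ref{lem2} --- a case that the paper's one-line argument glosses over, since Theorem \ref{MainTHM1} requires \emph{strictly} negative flag curvature when $n>2$ and only guarantees the conclusion under ${\bf K}\le 0$ in dimension two. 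In that sense your proof is not merely an alternative but actually fills in a step the paper leaves implicit; the only cost is reliance on the stronger classification form of the Huang--Shen theorem rather than just the sign transfer from sectional to scalar flag curvature.
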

\begin{proof}
Every  homogeneous Finsler metric of non-positive sectional flag curvature is of non-positive scalar flag curvature \cite{HS}. By Theorem \ref{MainTHM1}, we get the proof.
\end{proof}

\section{Proof of Theorem \ref{MainTHM1.5}}
 A Killing frame for an $n$-dimensional Finsler manifold $(M, F)$  is a set of local vector fields $\{X_i\}_{i =1}^n$, defined on an open subset $U\subseteq M$ around a given point, such that: (1)  The set of tangent vectors $\{X_i(x)|\,\,i=1, \ldots, n\}$ gives a basis for every tangent space $T_x(M)$, at any point  $x \in U$; (2) In $U$, each $ X_i $ satisfies $\tilde{X}_i(F) = 0$.

Though Killing frames are rare in the general study of Finsler geometry, they can be easily found for a homogeneous Finsler space at any given point \cite{HD2}. Let the homogeneous Finsler space $(M,F)$ be presented as $M=G/H$, where $H$ is the isotropy subgroup
for the given $x$. The tangent space $T_xM$ can be identified as the quotient
$\mathfrak{m}=\mathfrak{g}/\mathfrak{h}$,  where $\mathfrak{g}$ and $\mathfrak{h}$ are
the Lie algebras of $G$ and $H$,  respectively. Take any basis $\{ v_1,\ldots,v_n \}$ of $\mathfrak{m}$, with the pre-images $\{\hat{v}_1,\ldots,\hat{v}_n\}$ in $\mathfrak{g}$. Then the Killing vector fields $\{X_1,\ldots,X_n\}$ on $M$ corresponding to $\hat{v}_i$s defines  a Killing frame around $x$. The choice of $\hat{v}_i$s or $X_i$s identifies the quotient space $\mathfrak{m}$ with a subspace of $\mathfrak{g}$. Thus, we can write
the decomposition of linear space
\begin{equation}\label{decomp}
\mathfrak{g}=\mathfrak{h}+\mathfrak{m}.
\end{equation}
With respect to the decomposition \eqref{decomp}  there is a projection map ${\rm pr}:\mathfrak{g}\rightarrow\mathfrak{m}$. Note that for  the bracket operation $[\cdot,\cdot]$ on $\mathfrak{g}$ we have
$[\cdot,\cdot]_{\mathfrak{m}}={\rm pr}[\cdot,\cdot]$.

\bigskip

As mentioned before, Xu and Deng  proved that a homogeneous Finsler metric has isotropic S-curvature  if and only if it has vanishing S-curvature \cite{HD2}.   Here, we develop this fact to the E-curvature of homogeneous $(\alpha, \beta)$-metrics.
\begin{thm}\label{THM1}
Let $(G/H, F=\alpha\phi(\beta/\alpha))$ be an $n$-dimensional homogeneous $(\alpha, \beta)$-manifold, where $\alpha$ is a $G$-invariant Riemannian metric, and $\beta$ is a $G$-invariant 1-form on $G/H$.  Then the following are equivalent:
\begin{description}
 \item[] (i) $F$ has isotropic mean Berwald curvature ${\bf E}=(n+1)/2 cF^{-1}{\bf h}$;
 \item[] (ii)  $F$ has vanishing E-curvature ${\bf E}=0$;
\end{description}
where $c=c(x)$ is a scalar function on $G/H$.
\end{thm}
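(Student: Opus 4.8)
The content of the statement is the implication (i)$\Rightarrow$(ii); the converse is trivial, since ${\bf E}=0$ is exactly the case $c=0$ of (i). So assume ${\bf E}=\tfrac{n+1}{2}cF^{-1}{\bf h}$. Because ${\bf S}=\partial G^m/\partial y^m-y^i\,\partial_{x^i}[\ln\sigma_F]$ and the second summand is linear in $y$, a direct differentiation gives $E_{ij}=\tfrac12\,\partial^2{\bf S}/\partial y^i\partial y^j$. Using $F^{-1}h_{ij}=\partial^2 F/\partial y^i\partial y^j$, the hypothesis (i) reads $\partial^2[\,{\bf S}-(n+1)cF\,]/\partial y^i\partial y^j=0$; as ${\bf S}-(n+1)cF$ is positively homogeneous of degree one in $y$ with vanishing vertical Hessian, it is a $1$-form, i.e. $F$ has almost isotropic S-curvature ${\bf S}=(n+1)cF+\eta$ with $\eta=\eta_i(x)y^i$. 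Since (ii) is precisely the condition $c\equiv 0$ (a vanishing ${\bf E}$ forces only that ${\bf S}$ be fibrewise linear, so $\eta$ need not vanish), the whole problem reduces to showing that the scalar $c$ is identically zero.

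The next step is to extract from homogeneity the two facts the computation will use. Because ${\bf E}$, $F$ and ${\bf h}$ are $G$-invariant, the scalar $c=c(x)$ is a $G$-invariant function on $G/H$ and, $G$ acting transitively, is a constant. Likewise $b:=\|\beta\|_\alpha$ is $G$-invariant and hence constant; differentiating $b^2=a^{ij}b_ib_j$ and using metric-compatibility of the Levi-Civita connection of $\alpha$ yields $b^ib_{i|j}=0$. Writing $b_{i|j}=r_{ij}+s_{ij}$ for its symmetric and skew parts and setting $r_j:=b^ir_{ij}$, $s_j:=b^is_{ij}$, this is the identity $r_j+s_j=0$, and in particular $r_0+s_0=(r_j+s_j)y^j=0$. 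These two structural consequences of homogeneity are the only facts about $G/H$ that the argument will use.

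Finally I would substitute these into the Cheng--Shen formula for the S-curvature of an $(\alpha,\beta)$-metric $F=\alpha\phi(s)$, $s=\beta/\alpha$, in which ${\bf S}$ splits as a multiple of $(r_0+s_0)$ plus a term built from $r_{00}$ and $s_0$ whose coefficients are universal functions of $s$ and $b$ formed from $Q=\phi'/(\phi-s\phi')$ and $\Delta=1+sQ+(b^2-s^2)Q'$. The relation $r_0+s_0=0$ annihilates the first summand, leaving an identity of the schematic form $-\,\Phi\,(2\alpha\Delta^2)^{-1}\big(r_{00}-2\alpha Q s_0\big)=(n+1)c\,\alpha\phi+\eta_0$ in the fibre variable $y$. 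Clearing the powers of $\alpha$ and $\Delta$ and separating the part rational in $y$ from the part carrying the irrational factor $\alpha=\sqrt{a_{ij}y^iy^j}$ converts this into polynomial identities; for non-Randers $\phi$ these are genuinely independent, so matching the term $(n+1)c\,\alpha\phi$ against the $r_{00}$-contribution both constrains $r_{ij}$ and expresses $c$ through a contraction of $r_{ij}$, which $b=\mathrm{const}$ and $r_j+s_j=0$ then force to vanish, giving $c=0$ and hence ${\bf E}=0$. The main obstacle is exactly this rational/irrational separation together with the degenerate cases in which the universal $\phi$-factor degenerates: for Randers type $\phi=1+s$ the separation is inconclusive and one appeals instead to the previously known Randers case of the statement, while for $n=2$ one works from the Berwald-frame identity $E_{jk}=\tfrac32\lambda h_{jk}$ of the preceding section, reducing (i) to $\lambda$ being a constant multiple of $F^{-1}$; in all such cases I would fall back on completeness of homogeneous metrics (Lemma \ref{lem1}), boundedness of the invariant tensor ${\bf E}$ (Lemma \ref{lem2}) and the convexity criterion (Lemma \ref{alaki2}) applied along geodesics to force $c=0$.
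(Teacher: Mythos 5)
Your opening reduction is the same as the paper's: since ${\bf E}$ is half the vertical Hessian of ${\bf S}$ and $F_{y^iy^j}=F^{-1}h_{ij}$, hypothesis (i) is equivalent to ${\bf S}=(n+1)cF+\eta$ with $\eta$ a $1$-form, and everything hinges on proving $c\equiv 0$. From there, however, your plan has a genuine gap: the step that is supposed to deliver $c=0$ --- substituting into the general expression for ${\bf S}$ of an $(\alpha,\beta)$-metric in terms of $r_{00}$ and $s_0$, clearing $\alpha$ and $\Delta$, and ``separating the rational from the irrational part'' --- is only asserted, not carried out, and it is precisely the hard part. For exact isotropic S-curvature ${\bf S}=(n+1)cF$ that separation is Cheng's classification theorem, a substantial computation in its own right; your situation is the weaker hypothesis ${\bf S}=(n+1)cF+\eta$, for which the analogous separation is the content of Theorem \ref{THM2} (via the non-constancy of $\Xi$) and cannot be waved through by saying the pieces are ``genuinely independent.'' The exceptional cases compound this: the Randers case is deferred to an external result, and the proposed fallback of applying Lemma \ref{alaki2} along geodesics has no convexity inequality behind it --- nothing in the hypotheses gives a sign on a second derivative of $c$ or of the norm of ${\bf E}$.

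The paper avoids all of this with one observation you do not use: for a homogeneous $(\alpha,\beta)$-metric the Deng--Wang formula \eqref{DengWang} expresses ${\bf S}(x,y)$ through brackets $[u,y]_{\mathfrak m}$, where $u\in{\mathfrak m}$ is the vector corresponding to $\beta$; hence ${\bf S}(x,u)={\bf S}(x,-u)=0$. Plugging $y=\pm u$ into ${\bf S}=(n+1)cF+\eta$ and adding the two equations kills $\eta$ and gives $c(x)\bigl(F(x,u)+F(x,-u)\bigr)=0$, so $c=0$; then ${\bf S}=\eta$ and two vertical derivatives give ${\bf E}=0$, uniformly in $\phi$, with no Randers/non-Randers split and no restriction on $n$. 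Your own deduction of $r_j+s_j=0$ from $b=\mathrm{const}$ is in fact the shadow of this argument: evaluating the reduced S-curvature formula at $y=\pm b^{\sharp}$, where $r_{00}$ and $s_0$ both vanish, yields the same two equations and the same conclusion. If you reorganize your third paragraph around that single evaluation instead of the term-matching machinery, the gap closes.
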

\begin{proof} Suppose that $\alpha=\sqrt{a_{ij}y^iy^j}$ is a Riemannian metric and $\beta=b_i(x)y^i$ is a 1-form on $M=G/H$.  Let $F:=\alpha\phi(s)$, $s=\beta/\alpha$, be an   $(\alpha, \beta)$-metric on the manifold $G/H$. Suppose that  $H$ is the isotropy subgroup
for the given $x \in M=G/H$. It suffices to prove the above equivalency at point $x$, since $(M,F)$ is a homogeneous Finsler manifold.

Here,  $\phi=\phi(s)$ is a $C^\infty$ function on the interval $(-b_0, b_0)$. For $b^2:=a^{ij}b_ib_j$, let
\begin{eqnarray}
\Phi:= -(n\Delta +1+sQ)(Q-sQ')-(b^2-s^2)(1 + sQ)Q'',
\end{eqnarray}
where
\be
Q:=\frac{\phi'}{\phi-s\phi'}, \ \ \ \  \Delta= 1+sQ+(b^2-s^2)Q'.
\ee
In \cite{DW}, Deng-Wang proved that the S-curvature of a homogeneous $(\alpha, \beta)$-metric, with respect to the decomposition (\ref{decomp}), is given by
\begin{equation}\label{DengWang}
{\bf S}(x,y)=\frac{1}{\alpha(x, y)}\frac{\Phi}{2\Delta^2}\Big (\kappa\langle [u, y]_\mathfrak{m}, y  \rangle+\alpha(x, y)Q\langle [u, y]_\mathfrak{m}, u  \rangle\Big ), \,\,\,\,\forall y\in \mathfrak{m},
\end{equation}
where $\kappa$ is a real constant and $u$ is the unique vector in $\mathfrak{m}$ corresponding to $\beta$.

It follows from (\ref{DengWang})  that
\[
{\bf S}(x, u)=0 \ \ \  \textrm{and} \ \ \  {\bf S}(x, -u)=0.
\]
Now, suppose that $F$ has isotropic mean Berwald curvature
\[
{\bf E}=\frac{n+1}{2F} c\ {\bf h},
\]
where $c=c(x)$ is a scalar function on $G/H$. Then, we  conclude that S-curvature of $F$ is in the following form
\begin{equation}\label{S-alaki1}
{\bf S}=(n+1)cF+\eta,
\end{equation}
where $\eta$ is 1-form on $G/H$. Putting $y=u$ and $y=-u$ in (\ref{S-alaki1}), respectively, we get
\begin{equation}\label{S-alaki2}
c(x)F(x, u)+\eta(x, u)=0,\,\,\,\, c(x)F(x, -u)-\eta(x, u)=0.
\end{equation}
Therefore,
\[
c(x)\big(F(x, u)+F(x, -u)\big)=0,
\]
and consequently $c(x)=0$. Putting it in \eqref{S-alaki1} implies that  ${\bf S}=\eta$ is a 1-form. Taking twice vertical derivative of ${\bf S}=\eta$ implies that the
E-curvature vanishes.
\end{proof}

\bigskip
Let  $F:=\alpha\phi(s)$, where $s=\beta/\alpha$ be an $(\alpha, \beta)$-metric  on a manifold $M$. Let us define
\be
\Xi:=\frac{(b^2 Q+s) \Phi}{\Delta^2}.\label{Xi0}
\ee
In \cite{NTa}, the authors proved the following.
\begin{thm}\label{NT}{\rm (\cite{NTa})}
Let $F=\alpha\phi(s)$, $s={\beta}/{\alpha}$, be an $(\alpha,\beta)$-metric. Suppose that $\Xi$ is not constant. Then $F$ is of isotropic $S$-curvature if and only if it is of isotropic $E$-curvature.
\end{thm}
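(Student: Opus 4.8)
The plan is to build everything on the elementary identity relating the mean Berwald curvature to the second vertical derivative of the S-curvature. Writing ${\bf S}=\partial G^m/\partial y^m-y^m\frac{\partial}{\partial x^m}\ln\sigma_F$ and recalling $E_{ij}=\frac12 B^m_{\ mij}=\frac12\,\partial^3 G^m/\partial y^m\partial y^i\partial y^j$, one sees at once that $E_{ij}=\frac12{\bf S}_{\cdot i\cdot j}$, since the volume term is linear in $y$ and is annihilated by two vertical derivatives. Because the angular metric satisfies $h_{ij}=FF_{\cdot i\cdot j}$, the forward implication is immediate and in fact holds for every Finsler metric: if ${\bf S}=(n+1)cF$ then $E_{ij}=\frac12(n+1)cF_{\cdot i\cdot j}=\frac{n+1}{2}cF^{-1}h_{ij}$, i.e.\ ${\bf E}=\frac{n+1}{2}cF^{-1}{\bf h}$.

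For the converse I would run the same identity backwards. Assuming ${\bf E}=\frac{n+1}{2}cF^{-1}{\bf h}$ gives ${\bf S}_{\cdot i\cdot j}=(n+1)cF_{\cdot i\cdot j}$ with $c=c(x)$ independent of $y$, hence $\big[{\bf S}-(n+1)cF\big]_{\cdot i\cdot j}=0$. A function on $TM_0$ that is homogeneous of degree one in $y$ and has vanishing vertical Hessian is necessarily linear in $y$, so by Euler's relation it equals $\eta_i(x)y^i$. Thus isotropic $E$-curvature is equivalent to the \emph{almost} isotropic condition ${\bf S}=(n+1)cF+\eta$ with $\eta=\eta_i(x)y^i$. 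Consequently the whole content of the theorem collapses to a single implication: for an $(\alpha,\beta)$-metric with $\Xi$ non-constant, ${\bf S}=(n+1)cF+\eta$ forces $\eta=0$.

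To establish this I would insert the explicit Cheng--Shen formula for the S-curvature of an $(\alpha,\beta)$-metric, schematically
\be
{\bf S}=\big(2\Psi-\lambda(b)\big)(r_0+s_0)-\frac{\Phi}{2\alpha\Delta^2}\,r_{00}+\frac{\Phi Q}{\Delta^2}\,s_0,
\ee
where $\Psi=Q'/(2\Delta)$, $\lambda(b)$ is the volume term, and $r_{ij},s_{ij},r_0,s_0,r_{00}$ are the usual symmetrization and anti-symmetrization invariants of $\nabla\beta$. Substituting the ansatz ${\bf S}=(n+1)c\,\alpha\phi(s)+\eta_iy^i$ and treating $\alpha$ as irrational over the field of rational functions of $y$, I would split the identity into its $\alpha$-proportional part and its rational (polynomial) part. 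The $\alpha$-proportional part must reproduce $(n+1)c\,\alpha\phi(s)$ together with the $r_{00}$ contribution, and so pins down $c$ and the trace part of $\nabla\beta$; the rational part is what must coincide with the genuine $1$-form $\eta$. Carrying out the separation, one finds that a non-zero remainder $\eta$ can persist only when the combination $\Xi=(b^2Q+s)\Phi/\Delta^2$ is constant in $s$; equivalently, after one differentiation in $s$ the matching identity exhibits $\Xi'(s)$ as a factor multiplying the $1$-form data. Since $\Xi$ is assumed non-constant, $\Xi'\not\equiv0$, forcing that data — and hence $\eta$ — to vanish and giving ${\bf S}=(n+1)cF$.

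I expect the main obstacle to be the bookkeeping of the rational/irrational separation together with the ODE constraints on $\phi$ carried by $Q,\Delta,\Phi$: one must verify that no non-trivial $\eta$ can hide inside the $\alpha$-even part, and then identify $\Xi$ precisely as the coefficient whose non-vanishing $s$-derivative obstructs a nonzero $\eta$. The degenerate branches — Randers type, and the locus where $\Phi\equiv0$ or $\Xi$ is constant — are exactly the cases placed outside the scope by the hypothesis, which is consistent with the way the statement is framed.
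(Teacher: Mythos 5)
First, a point of reference: the paper does not prove Theorem \ref{NT} at all --- it is imported verbatim from \cite{NTa} --- so there is no in-paper argument to measure you against; your attempt has to stand on its own. The first half of your proposal does stand: the identity $E_{ij}=\tfrac12{\bf S}_{\cdot i\cdot j}$ (the volume term being linear in $y$), the resulting triviality of the direction ``isotropic ${\bf S}$ $\Rightarrow$ isotropic ${\bf E}$'' via $h_{ij}=FF_{\cdot i\cdot j}$, and the reduction of the converse to showing that ${\bf S}=(n+1)cF+\eta$ forces $\eta=0$ are all correct and are the standard way to organize this result. (Minor caveat: the conclusion that a $1$-homogeneous function with vanishing vertical Hessian is linear uses connectedness of $T_xM\setminus\{0\}$, i.e.\ $n\ge 2$, and the $\eta$ you produce is a priori just a $1$-form, not necessarily closed --- harmless here since the goal is $\eta=0$.)

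The gap is that the entire nontrivial content of the theorem --- the implication ``$\Xi$ non-constant and ${\bf S}=(n+1)cF+\eta$ $\Rightarrow$ $\eta=0$'' --- is announced rather than proved. You correctly name the tool (the Cheng--Shen formula
${\bf S}=\bigl(2\Psi-\lambda(b)\bigr)(r_0+s_0)-\tfrac{\Phi}{2\alpha\Delta^2}r_{00}+\tfrac{\Phi Q}{\Delta^2}s_0$
and the rational/irrational splitting in $y$), but the decisive assertion --- that after the splitting the $1$-form data appears multiplied by $\Xi'(s)$, so that non-constancy of $\Xi$ kills $\eta$ --- is exactly what must be computed, and you defer it with ``one finds that'' and ``I expect.'' As written, the hypothesis that $\Xi=(b^2Q+s)\Phi/\Delta^2$ is non-constant is never actually used in a verified step, so the proof is incomplete precisely where the theorem lives. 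To close it you would need to substitute the ansatz into the formula, separate the terms rational in $y$ from those carrying a factor of $\alpha$, and exhibit the matching identity in which $\Xi$ (or its $s$-derivative) multiplies the coefficients of $\eta$; only then does the non-constancy hypothesis do its work. Note also that this computation is where the exceptional branches you mention (Randers type, $\Phi\equiv 0$) must be seen to coincide with the locus where $\Xi$ is constant, which is the content of Theorem \ref{THM2} of the paper rather than something that can be assumed.
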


\bigskip
Here, we prove that $\Xi$ is always non-constant for regular $(\alpha, \beta)$-metrics. 
\begin{thm}\label{THM2}
Let $F=\alpha\phi(s)$, $s={\beta}/{\alpha}$, be a regular $(\alpha,\beta)$-metric. Then $F$ is of isotropic $S$-curvature if and only if it is of isotropic $E$-curvature.
\end{thm}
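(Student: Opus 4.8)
The plan is to leverage Theorem \ref{NT}, which already gives the equivalence as soon as $\Xi$ fails to be constant; so the entire task reduces to proving that $\Xi$ is never constant for a regular (genuinely non-Riemannian) $(\alpha,\beta)$-metric. I would argue by contradiction, assuming $\Xi\equiv\lambda$ for some constant $\lambda$ and deducing that $\phi$ is forced into Riemannian form. Throughout I treat $Q,\Delta,\Phi,\Xi$ as functions of the two variables $(s,b)$ on the domain $\{|s|\le b<b_0\}$ cut out by $\phi$, and I would first record the simplifying identities $1+sQ=\phi/(\phi-s\phi')$ and $\Delta=\phi\,\Theta/(\phi-s\phi')^2$, where $\Theta:=\phi-s\phi'+(b^2-s^2)\phi''$. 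Regularity supplies $\phi>0$, $\phi-s\phi'>0$ and $\Theta>0$, so $\Delta>0$ and no denominator in $\Xi$ degenerates.

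The decisive step is to evaluate $\Xi$ along the boundary locus $s=b$, where the factor $b^2-s^2$ vanishes. There the final summand of $\Phi$ drops out, and from $1+bQ=\Delta$ one gets $n\Delta+1+bQ=(n+1)\Delta$ together with $b^2Q+s=b(1+bQ)=b\Delta$. Feeding these into $\Xi=(b^2Q+s)\Phi/\Delta^2$ cancels every power of $\Delta$ and yields the clean identity
\[
\Xi\big|_{s=b}=-(n+1)\,b\,(Q-bQ')\big|_{s=b}.
\]
If $\Xi\equiv\lambda$, then varying $b$ over $(0,b_0)$ turns this into the first-order linear ODE $b\,(Q-bQ')=\mu$ with $\mu:=-\lambda/(n+1)$, whose general solution is $Q(s)=Cs+\mu/(2s)$.

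To finish I would invoke smoothness at the origin: since $\phi$ is smooth and positive on $(-b_0,b_0)$, the quotient $Q=\phi'/(\phi-s\phi')$ is finite at $s=0$, which forces $\mu=0$ (hence $\lambda=0$). Then $Q(s)=Cs$, i.e. $\phi'/\phi=Cs/(1+Cs^2)$, which integrates to $\phi=k\sqrt{1+Cs^2}$, so that $F=k\sqrt{\alpha^2+C\beta^2}$ is Riemannian --- the desired contradiction. Hence $\Xi$ is non-constant and Theorem \ref{NT} delivers the conclusion.

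I expect the main obstacle to be conceptual rather than computational: spotting that restricting to the boundary $s=b$ trivializes the otherwise unwieldy expression for $\Xi$, and realizing that regularity at $s=0$ is exactly what kills the singular branch $\mu/(2s)$ of the ODE. A secondary point worth handling carefully is the meaning of \emph{constant}; if one only assumes $\Xi$ is constant in $s$ for each fixed $b$, the same endpoint identity gives $\Xi|_{s=b}=\Xi|_{s=-b}$, which upon varying $b$ forces $\phi$ to be even, whence $Q(0)=0$ yields $\Xi\equiv0$, and then splitting the identity $\Phi\equiv0$ (which is affine in $b^2$) into its two coefficients again returns $Q-sQ'=0$ and the same Riemannian conclusion.
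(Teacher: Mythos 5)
Your proposal is correct, and it reaches the conclusion by a genuinely different route than the paper. Both arguments reduce, via Theorem \ref{NT}, to showing that $\Xi$ cannot be constant for a regular non-Riemannian $(\alpha,\beta)$-metric, but the mechanisms differ. The paper rewrites $\Xi=c$ as $\big[(b^2-s^2)\phi'+s\phi\big]\Phi=c(\phi-s\phi')\Delta^2$ and then ``lets $s$ approximate'' the algebraic expression $\frac{\phi(b)+\sqrt{\phi^2(b)+4b^2\phi'^2(b)}}{2\phi'(b)}$ to force $c=0$, after which it invokes the characterization $\Phi=0\Leftrightarrow F$ Riemannian from the Cheng--Wang--Wang paper. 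That limiting step is delicate: the quoted value is a root of the frozen-coefficient quadratic $(b^2-s^2)\phi'(b)+s\phi(b)=0$ rather than of the actual function of $s$, and for $\phi'(b)>0$ it lies outside the admissible range $|s|\le b$, so it is not clear the limit is being taken inside the domain. Your boundary evaluation at $s=\pm b$ sidesteps this entirely: the identities $\Delta|_{s=b}=1+bQ$, $b^2Q+s|_{s=b}=b\Delta$ and $n\Delta+1+sQ|_{s=b}=(n+1)\Delta$ are exact, the resulting relation $b\,(Q-bQ')|_{s=b}=-\lambda/(n+1)$ is an honest first-order ODE whose singular branch $\mu/(2s)$ is killed by smoothness of $Q$ at $s=0$, and the surviving branch $Q=Cs$ integrates directly to $\phi=k\sqrt{1+Cs^2}$, so you never need the external $\Phi=0$ characterization on the main line (your closing paragraph in effect re-derives it by splitting $\Phi\equiv 0$ into its coefficients in $b^2$). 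I verified the computations $\Delta=\phi\Theta/(\phi-s\phi')^2$, the boundary identity $\Xi|_{s=b}=-(n+1)b(Q-bQ')|_{s=b}$, and the particular solution check; all are correct, and the symmetric evaluation at $s=-b$ extends $Q=Cs$ to negative $s$.

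One shared caveat, which your ODE-in-$b$ step merely makes explicit: both your argument and the paper's treat $\Xi$ as a function of the two independent variables $(s,b)$ on $\{|s|\le b<b_0\}$, so ``non-constant'' must be read in that sense (this is the convention behind Theorem \ref{NT}); if $\|\beta\|_\alpha$ were a single fixed value the diagonal $s=b$ would degenerate to one point and neither proof would go through as written. It would also be worth one sentence noting that the Riemannian case of the stated equivalence is trivial ($\mathbf{S}=\mathbf{E}=0$), since your contradiction only applies to genuinely non-Riemannian $F$.
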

\begin{proof}
First we remark that if $\Xi=0$ then we get
\[
(b^2 Q+s) \Phi=0.
\]
If $b^2 Q+s=0$, then we have $\phi=\sqrt{s^2-b^2}$ which is not regular. Thus  $\Phi=0$. In \cite{CWW}, it is proved that an $(\alpha,\beta)$-metric is a Rientannian metric if and only if $\Phi=0$. Thus $\Xi=0$ characterizes Riemannian metrics in the class of $(\alpha,\beta)$-metrics.

Now, we are going to show that for regular $(\alpha,\beta)$-metrics the quantity $\Xi$ is not constant. On contrary, suppose that $\Xi=c (constan)$. Then by \eqref{Xi0}, we have
\[
(b^2 Q+s) \Phi=c\Delta^2
\]
which yields
\be
\big[(b^2-s^2)\phi'+s\phi\big]\Phi=c (\phi-s\phi')\Delta^2.\label{Xi1}
\ee
By the regularity of $F=\alpha\phi(s)$, $b:=||\beta||_\alpha$  and $|s|\leq b$, we have
\[
\phi(s)>0,\ \ \ \ \ \phi(s)-s\phi'(s)>0, \ \  \forall |s|<b_0
\]
Letting $s$ approximate
\[
\frac{\phi(b)+\sqrt{\phi^2(b)+4b^2\phi'^2(b)}}{2\phi'(b)}
\]
in \eqref{Xi1} yields $c=0$. In this case, $F$ is Riemannian. Thus for regular Finsler metrics, $\Xi$ is not constant.
\end{proof}

\bigskip

By Theorems \ref{THM1} and \ref{THM2}, we conclude the following.
\begin{cor}
Let $(G/H, F=\alpha\phi(\beta/\alpha))$ be an $n$-dimensional homogeneous $(\alpha, \beta)$-manifold, where $\alpha$ is a $G$-invariant Riemannian metric, and $\beta$ is a $G$-invariant 1-form on $G/H$.  Then the following are equivalent:
\begin{description}
 \item[] (i) $F$ has isotropic mean Berwald curvature ${\bf E}=(n+1)/2 cF^{-1}{\bf h}$;
 \item[] (ii)  $F$ has isotropic S-curvature ${\bf S}=(n+1)cF$;
 \item[] (ii)  $F$ has isotropic S-curvature ${\bf S}=(n+1)cF+\eta$;
\end{description}
where $c=c(x)$ is a scalar function  and $\eta=\eta_i(x)y^i$  is a 1-form on $G/H$. In this case, ${\bf S}=0$.
\end{cor}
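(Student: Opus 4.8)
The plan is to read this corollary as a synthesis of Theorems \ref{THM1} and \ref{THM2} together with the elementary relation between the S-curvature and the mean Berwald curvature, rather than as a statement requiring fresh computation. Write the three conditions as (i) ${\bf E}=\frac{n+1}{2}cF^{-1}{\bf h}$, (ii) ${\bf S}=(n+1)cF$, and (iii) ${\bf S}=(n+1)cF+\eta$ with $\eta=\eta_i(x)y^i$ a $1$-form. First I would record the identity $E_{ij}=\frac12 B^m_{\ mij}=\frac12\,\partial^2{\bf S}/\partial y^i\partial y^j$, which follows directly from the definitions of $G^i$, $B^i_{\ jkl}$ and ${\bf S}$ once one notices that the term $y^i\frac{\partial}{\partial x^i}\ln\sigma_F$ appearing in ${\bf S}$ is linear in $y$ and is therefore annihilated by two vertical derivatives.

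With this identity in hand, the implication (iii)$\Rightarrow$(i) is immediate: since $\eta$ is a $1$-form it is linear in $y$ and contributes nothing to $\partial^2{\bf S}/\partial y^i\partial y^j$, so $E_{ij}=\frac{n+1}{2}c\,\partial^2 F/\partial y^i\partial y^j=\frac{n+1}{2}cF^{-1}h_{ij}$, using $h_{ij}=F\,\partial^2 F/\partial y^i\partial y^j$; this is exactly (i), and the scalar $c$ is preserved along the way. The implication (ii)$\Rightarrow$(iii) is trivial, being the case $\eta=0$. It therefore remains to close the cycle with (i)$\Rightarrow$(ii), and this is precisely the content of Theorem \ref{THM2} in the regular $(\alpha,\beta)$-case: isotropic E-curvature implies isotropic S-curvature. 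Chaining the three implications yields (i)$\Leftrightarrow$(ii)$\Leftrightarrow$(iii).

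To obtain the final assertion ${\bf S}=0$, I would invoke Corollary 4.3 of \cite{HD2} (the Xu--Deng rigidity already quoted in the proof of Lemma \ref{CLem}): a homogeneous Finsler metric with isotropic S-curvature ${\bf S}=(n+1)cF$ automatically satisfies ${\bf S}=0$. Since condition (ii) is now available, this forces ${\bf S}=0$, and then Theorem \ref{THM1} (equivalently, differentiating ${\bf S}=0$ twice in $y$) upgrades (i) to the vanishing ${\bf E}=0$.

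I expect that the only point needing care, rather than a genuine obstacle, is the standing regularity hypothesis: Theorem \ref{THM2} is stated for regular $(\alpha,\beta)$-metrics, so the step (i)$\Rightarrow$(ii) relies on $F$ being regular (equivalently $\phi>0$ and $\phi-s\phi'>0$ for all $|s|<b_0$), which is exactly what guarantees the non-constancy of $\Xi$ exploited there. Everything else is formal manipulation of the defining identities, and no curvature computation beyond the vertical-derivative relation $E_{ij}=\frac12\,{\bf S}_{\,\cdot i\cdot j}$ is required.
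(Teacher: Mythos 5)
Your proposal is correct and is essentially the argument the paper intends: the paper derives this corollary in a single line from Theorems \ref{THM1} and \ref{THM2}, and your chain --- (iii)$\Rightarrow$(i) by two vertical derivatives of ${\bf S}$ using $E_{ij}=\tfrac12{\bf S}_{\cdot i\cdot j}$ and $h_{ij}=FF_{y^iy^j}$, (ii)$\Rightarrow$(iii) trivially, (i)$\Rightarrow$(ii) via Theorem \ref{THM2}, and ${\bf S}=0$ via the Xu--Deng rigidity (Corollary 4.3 of \cite{HD2}) already invoked in Lemma \ref{CLem} --- is exactly that synthesis, with the regularity caveat for Theorem \ref{THM2} correctly flagged.
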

\bigskip

\noindent{\bf Proof of Theorem \ref{MainTHM1.5}:} Let  $F:=\alpha\phi(s)$, where $s=\beta/\alpha$ be an $(\alpha, \beta)$-metric  on a manifold $M=G/H$ introduced in Theorem \ref{THM1}. Let us define $b_{i;j}$ by $b_{i;j}\theta^j:=db_i-b_j\theta^j_{\ i}$, where
$\theta^i:=dx^i$ and $\theta^j_{\ i}:=\Gamma^j_{\ ik}dx^k$ denote
the Levi-Civita connection form of $\alpha$. Let us define
\begin{eqnarray*}
r_{ij}:=\frac{1}{2}\left(b_{i;j}+b_{j;i}\right), \ \ \ s_{ij}:=\frac{1}{2}\left(b_{i;j}-b_{j;i}\right),\ \ \ s_i:=b^j s_{ji},
\end{eqnarray*}
where $b^i:=a^{im}b_m$. By definition, the 1-form $\beta$ is parallel if and only if $r_{ij}=s_{ij}=0$.\\\\
Now, we consider two cases as follows:\\\\
{\bf Case (i): $F$ is non-Randers type metric.}  In \cite{C}, Cheng proved that every $(\alpha,\beta)$-metric $F:=\alpha\phi(s)$, $s=\beta/\alpha$, of non-Randers type, i.e., $\phi\neq c_1\sqrt{1+c_2s^2}+c_3s$ for any constants $c_1>0$, $c_2$ and $c_3$, has isotropic S-curvature ${\bf S}=(n+1)cF$ if and only if $\beta$ satisfies
\be
r_{ij} = 0, \ \ \ s_i= 0.\label{S}
\ee
In this case, ${\bf S}=0$, regardless of the choice of a particular $\phi=\phi(s)$. In \cite{LS} Li-Shen showed that an $(\alpha,\beta)$-metric $F:=\alpha\phi(s)$, $s=\beta/\alpha$, of non-Randers type on a manifold $M$ of dimension $n\geq 3$ has vanishing J-curvature ${\bf J}=0$, if and only if $\beta$ satisfies
\be
r_{ij} = k(b^2a_{ij} - b_ib_j), \ \ \ s_{ij}= 0.\label{JJ}
\ee
where $k = k(x)$ is a number depending on $x$ and $\phi=\phi(s)$ satisfies
\[
\Phi= \frac{\lambda}{\sqrt{b^2-s^2}}\Delta^{\frac{3}{2}},
\]
where $\lambda$ is a constant. By Lemma \ref{CLem}, \eqref{S}, and \eqref{JJ}, it follows that $\beta$ is parallel. In \cite{Sh}, Shen proved that a regular $(\alpha,\beta)$-metric is a Berwald metric if and only if its related 1-form $\beta$ is parallel. Then $F$ is a Berwald metric. Now, we have two subcases as follows:\\\\
{\bf Case (i)a: ${\bf K} =0$. } By the Numata theorem, every Berwald metric with zero flag curvature is locally Minkowskian.\\\\
{\bf Case (i)b: ${\bf K} <0$. } Suppose that ${\bf K} <0$ at a point $x\in M$. From \eqref{JIK*} and the assumption ${\bf J}=0$, we get $R^i_{\ m}I^mI_i=0$ which implies $I_i(x,y) =0$ for all $y\in T_xM_0$. By Deicke's theorem, $F$ is Riemannian.\\\\
{\bf Case (ii): $F$ is a Randers-type metric.} Now, suppose that $F$ is a Randers-type metric. It is well-known that every Randers-type metric is C-reducible
\be
{\bf C}_y(u, v, w)= {1\over n+1}\Big\{{\bf I}_y(u){\bf h}_y(v, w)+{\bf I}_y(v){\bf h}_y(u, w)+{\bf I}_y(w){\bf h}_y(u, v) \Big\}.\label{CR}
\ee
Taking a horizontal derivation of \eqref{CR} yields
\be
{\bf L}_y(u, v, w)= {1\over n+1}\Big\{{\bf J}_y(u){\bf h}_y(v, w)+{\bf J}_y(v){\bf h}_y(u, w)+{\bf J}_y(w){\bf h}_y(u, v) \Big\}.\label{LR}
\ee
By considering Lemma \ref{CLem}, the relation \eqref{LR} implies that ${\bf L}=0$. On the other hand, we have ${\bf S}=0$, which yields ${\bf E}=0$. In \cite{Cr}, Crampin showed that every Landsberg metric with vanishing mean Berwald curvature is a Berwald metric. This completes the proof.
\qed

\bigskip
By considering Case (i)b in Theorem \ref{MainTHM1.5}, we can conclude the following rigidity result that has been proved by Deng-Hu in \cite{DH3}.
\begin{cor}{\rm (\cite{DH3})}
Let $(M, F)$ be a homogeneous Randers space of Berwald type. If the
flag curvature of $F$ is negative everywhere, then $F$ is a Riemannian metric.
\end{cor}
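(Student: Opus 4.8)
The plan is to observe that a Randers metric of Berwald type already carries the two structural features that power Case (i)b in the proof of Theorem \ref{MainTHM1.5}, so that the pointwise flag-curvature argument there applies with no extra work. Writing the Randers metric as the $(\alpha,\beta)$-metric $F=\alpha\phi(s)$ with $\phi(s)=1+s$, the hypothesis of Berwald type means ${\bf B}=0$. Since every Berwald metric is a Landsberg, and hence a weakly Landsberg, metric, we have ${\bf J}=0$; and since every Berwald metric satisfies ${\bf S}=0$, both ingredients are available directly. In particular there is no need to invoke Lemma \ref{CLem} to manufacture the weakly Landsberg condition, nor to run through the Randers-type analysis that ordinarily only yields the Berwald conclusion.

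First I would feed ${\bf S}=0$ into the identity \eqref{JIK*}, valid for any metric of vanishing S-curvature, to get $I^i_{\ |p|q}y^py^q=-R^i_{\ m}I^m$. Because ${\bf J}=0$, the mean Cartan torsion satisfies $I^i_{\ |p}y^p=J^i=0$, and differentiating once more along the geodesic shows the left-hand side $I^i_{\ |p|q}y^py^q=J^i_{\ |q}y^q$ vanishes as well. Hence $R^i_{\ m}I^m=0$, and contracting with $I_i$ produces
\[
R^i_{\ m}I^mI_i=\g_y\big({\bf I}_y,{\bf R}_y({\bf I}_y)\big)=0
\]
at every $(x,y)\in TM_0$.

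The geometric heart of the argument is to read this identity through the flag curvature \eqref{FC0}. Here I would use that $I_iy^i=0$, which follows from $C_{ijk}y^i=0$ together with $I_i=g^{jk}C_{ijk}$, so that ${\bf I}_y$ is $\g_y$-orthogonal to the flagpole $y$. Consequently, whenever ${\bf I}_y\neq0$ the flag $P=\mathrm{span}\{y,{\bf I}_y\}$ is nondegenerate: the denominator $\g_y(y,y)\g_y({\bf I}_y,{\bf I}_y)-\g_y(y,{\bf I}_y)^2=\g_y(y,y)\g_y({\bf I}_y,{\bf I}_y)$ in \eqref{FC0} is strictly positive, while the numerator is precisely the quantity $\g_y({\bf I}_y,{\bf R}_y({\bf I}_y))$ computed above. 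The assumption that ${\bf K}<0$ everywhere forces this numerator to be strictly negative, contradicting its vanishing unless ${\bf I}_y=0$. Thus ${\bf I}\equiv0$, and Deicke's theorem yields that $F$ is Riemannian.

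I expect the only genuinely delicate point to be the orthogonality relation $\g_y({\bf I}_y,y)=0$, since it is exactly what guarantees the flag is nondegenerate and lets the strict sign of ${\bf K}$ bite; everything else is bookkeeping with identities already recorded in the paper. It is worth noting that homogeneity, exploited heavily earlier (for instance through Lemma \ref{CLem}), plays no essential role in this final step: the Berwald hypothesis supplies ${\bf S}=0$ and ${\bf J}=0$ outright, and the curvature argument is entirely pointwise.
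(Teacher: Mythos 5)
Your proof is correct and follows essentially the same route as the paper, which simply invokes Case (i)b of the proof of Theorem \ref{MainTHM1.5}: the Berwald hypothesis gives ${\bf S}=0$ and ${\bf J}=0$, identity \eqref{JIK*} then yields $R^i_{\ m}I^mI_i=0$, and negative flag curvature together with Deicke's theorem forces $F$ to be Riemannian. Your added observations --- that homogeneity and Lemma \ref{CLem} are not actually needed here since Berwald type supplies ${\bf S}=0$ and ${\bf J}=0$ outright, and the explicit check that $\g_y({\bf I}_y,y)=0$ makes the flag nondegenerate --- are correct refinements of the same argument rather than a different approach.
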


\bigskip

Now, we consider homogeneous isotropic Berwald  metrics and prove the following.
\begin{cor}
Every  homogeneous isotropic Berwald  metric on a manifold $M$ of dimension $n\geq 3$  with non-positive flag curvature is Riemannian or locally Minkowskian.
\end{cor}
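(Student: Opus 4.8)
The plan is to reduce the statement to the two building blocks already in hand, namely the structure theorem for homogeneous isotropic Berwald metrics and Theorem \ref{MainTHM1.5}, handling the purely Berwald case by the weakly Landsberg mechanism of Lemma \ref{CLem}. First I would invoke the classification of \cite{TN2}: since $\dim M=n\geq 3$, a homogeneous isotropic Berwald metric is either a Berwald metric or a Randers metric of Berwald type. This dichotomy splits the argument into two cases. Note that Theorem \ref{MainTHM1} alone does not suffice here, as it requires \emph{negative} flag curvature when $n>2$, whereas the present hypothesis is only non-positivity.

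In the Randers-of-Berwald-type case I would observe that $F$ is then a regular $(\alpha,\beta)$-metric, and that every Berwald metric satisfies ${\bf S}=0$, so in particular $F$ has almost isotropic S-curvature with $c=0$ and $\eta=0$. Theorem \ref{MainTHM1.5} then applies directly and forces $F$ to be Riemannian or locally Minkowskian.

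In the genuine Berwald case I would argue without appeal to the $(\alpha,\beta)$ structure. A Berwald metric has ${\bf B}=0$, hence is automatically Landsberg and therefore weakly Landsberg, giving ${\bf J}=0$; moreover ${\bf S}=0$, so identity \eqref{JIK*} is available. I would then split on the flag curvature. If ${\bf K}=0$, Numata's theorem gives that $F$ is locally Minkowskian. If instead ${\bf K}<0$ at some point, I would rerun the computation of Case (i)b in the proof of Theorem \ref{MainTHM1.5}: contracting \eqref{JIK*} with $I_i$ and using ${\bf J}=0$ together with the boundedness of $\|{\bf I}\|$ from Lemma \ref{lem2} and the convexity criterion of Lemma \ref{alaki2} forces $R^i_{\ m}I^mI_i=0$ along every unit geodesic, whence negative definiteness of the Riemann curvature yields $I_i(x,y)=0$ and $F$ is Riemannian by Deicke's theorem.

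The step I expect to be the main obstacle is not any single computation but the passage from these pointwise alternatives to one global conclusion: a priori $F$ could behave Riemannianly near some points and merely Berwaldianly near others, or ${\bf K}$ could vanish on part of $M$ while being negative elsewhere. Here homogeneity is decisive—by Lemma \ref{lem2} every isometry-invariant tensor has constant norm, so vanishing of ${\bf C}$ (Riemannian) or of ${\bf B}$ (Berwald) at a single point propagates across all of $M$, and the reduction obtained in the negatively curved subcase transfers globally as well. This removes the ambiguity and completes the identification of $F$ as Riemannian or locally Minkowskian.
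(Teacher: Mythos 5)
Your proposal is correct and follows essentially the same route as the paper's own proof: the dichotomy from \cite{TN2} (Berwald metric versus Randers metric of Berwald type), Numata's theorem in the ${\bf K}=0$ case, and the Case (i)b argument of Theorem \ref{MainTHM1.5} (via ${\bf J}=0$ and \eqref{JIK*}) in the negatively curved case. Your additional remarks on routing the Randers-of-Berwald-type branch through Theorem \ref{MainTHM1.5} and on globalizing the pointwise alternatives by homogeneity are consistent refinements of, not departures from, the paper's argument.
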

\begin{proof}
Every isotropic Berwald metric has isotropic mean Berwald curvature. In \cite{TN2}, the authors proved that every  homogeneous isotropic Berwald  metric on a manifold $M$ of dimension $n\geq 3$ is a Berwald metric or Randers metric of Berwald-type. By Numata theorem every Berwald metric of vanishing flag curvature f ${\bf K}= 0$ is locally Minkowskian. Also, if ${\bf K}\neq 0$, then by the same method used in  Case (i)b of Theorem \ref{MainTHM1.5}, $F$ is Riemannian. This completes the proof.
\end{proof}

\bigskip

The Douglas metrics are an extension of Berwald metrics introduced
by Douglas as a projective invariant class of  Finsler metrics. A Finsler
metric is called a Douglas
metric if
\[
G^i=\frac{1}{2}\Gamma^i_{jk}(x)y^jy^k + P(x,y)y^i,\label{Douglas0}
\]
where $\Gamma^i_{jk}=\Gamma^i_{jk}(x)$  are scalar functions on $M$ and
$P=P(x, y)$ is a homogeneous function of degree one with respect to
$y$ on $TM_0$.  For homogeneous Douglas metrics,  we prove the
following.
\begin{cor}
Every homogeneous Douglas $(\alpha, \beta)$-metric with non-positive
flag curvature is Riemannian or
locally Minkowskian.
\end{cor}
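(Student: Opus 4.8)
The plan is to deduce the corollary from Theorem \ref{MainTHM1.5}: it suffices to show that a homogeneous Douglas $(\alpha,\beta)$-metric automatically has almost isotropic S-curvature, after which Theorem \ref{MainTHM1.5} (which already treats the Randers and non-Randers cases internally) finishes under the non-positive flag curvature hypothesis. So the whole content of the statement is the implication
\[
\text{homogeneous}+\text{Douglas}\ \Longrightarrow\ {\bf S}=(n+1)cF+\eta .
\]
It should be stressed that homogeneity is indispensable here: the Douglas property by itself imposes no control on the S-curvature (for a Randers metric $F=\alpha+\beta$ the Douglas condition merely says that $\beta$ is closed, which constrains neither $r_{ij}$ nor the S-curvature), so the argument must draw on the rigidity supplied by the transitive isometry group.

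Following the dichotomy in the proof of Theorem \ref{MainTHM1.5}, I would split into the non-Randers and the Randers type. In the non-Randers case I would invoke the known characterization of Douglas $(\alpha,\beta)$-metrics, which recasts the Douglas condition as a prescribed algebraic form for the covariant derivative $b_{i;j}=r_{ij}+s_{ij}$ of $\beta$ together with an ODE on $\phi$, in the same spirit as the conditions \eqref{S} and \eqref{JJ}. Homogeneity then enters decisively: because $\alpha$ and $\beta$ are $G$-invariant, the norm $b=\|\beta\|_{\alpha}$ is constant, every scalar coefficient occurring in that characterization is a $G$-invariant function and hence constant, and differentiating $b^{2}=\mathrm{const}$ produces a linear relation between $b^{j}r_{ij}$ and $s_i$. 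Substituting these facts back into the Douglas form of $r_{ij}$ and $s_{ij}$, I expect the surviving free scalar to be forced to vanish, so that $b_{i;j}=0$; by Shen's theorem quoted in Case (i) the metric is then Berwald and in particular ${\bf S}=0$, which is a special case of almost isotropic S-curvature.

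The Randers case is where I expect the main obstacle. Now the Douglas condition yields only that $\beta$ is closed, so $\beta$ need not be parallel and $F$ need not be Berwald, and one cannot shortcut to ${\bf S}=0$. The plan is instead to translate closedness of $\beta$ together with the constancy of $b$ into the Lie-algebra data entering the Deng--Wang formula \eqref{DengWang}, and to read off from that formula that ${\bf S}$ is of the required form $(n+1)cF+\eta$. This step is genuinely delicate, precisely because --- as the paper notes --- there is no closed formula for the S-curvature of a general homogeneous Finsler metric, and one is forced to lean on the special $(\alpha,\beta)$ expression \eqref{DengWang} and on the constancy of invariant scalars. Once almost isotropic S-curvature is secured in both cases, a single appeal to Theorem \ref{MainTHM1.5} completes the proof.
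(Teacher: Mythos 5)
Your overall strategy (reduce to Theorem \ref{MainTHM1.5} by showing that the hypotheses force almost isotropic S-curvature) is in the right spirit, but the proposal as written has a genuine gap precisely at the point you yourself flag as ``delicate,'' and the paper avoids that point entirely by citing a structure theorem you do not use. The paper's proof is two lines: by Liu--Deng \cite{LD}, a homogeneous $(\alpha,\beta)$-metric is a Douglas metric if and only if it is a Berwald metric or a Douglas metric of Randers type; the Berwald branch is settled by Numata's theorem (and ${\bf S}=0$), and the Randers branch is fed into Theorem \ref{MainTHM1.5}. Your non-Randers analysis is essentially an attempt to re-derive the Berwald branch of Liu--Deng's dichotomy by hand, and the decisive step --- ``I expect the surviving free scalar to be forced to vanish, so that $b_{i;j}=0$'' --- is asserted rather than proved. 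That assertion \emph{is} the content of Liu--Deng's theorem in the non-Randers case; it is not a routine consequence of $b=\mathrm{const}$ and the relation $b^{j}r_{ij}+s_i=0$, and without it your Case (i) does not close.

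The Randers-type case is the more serious problem. There you correctly observe that the Douglas condition only gives $d\beta=0$, i.e.\ $s_{ij}=0$, and homogeneity gives $b=\mathrm{const}$, hence $s_0=0$ and $\rho_0=0$; the standard Randers S-curvature formula then reduces to ${\bf S}=(n+1)\,r_{00}/(2F)$. This is of the form $(n+1)cF+\eta$ with $\eta$ a $1$-form only if $r_{00}=2c(\alpha^2-\beta^2)$, which is exactly the isotropic S-curvature condition and does \emph{not} follow from closedness of $\beta$; your plan to ``read it off'' from the Deng--Wang formula \eqref{DengWang} is a hope, not an argument, and you concede as much. So the implication you declare to be ``the whole content of the statement,'' namely homogeneous $+$ Douglas $\Rightarrow$ almost isotropic S-curvature, is never established in either branch. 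To repair the proof along the paper's lines, replace both branches by a single appeal to the Liu--Deng classification and then argue each alternative (Berwald via Numata/Szab\'{o}; Douglas--Randers via Theorem \ref{MainTHM1.5}), rather than trying to manufacture the S-curvature hypothesis from the Douglas condition directly.
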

\begin{proof}
In \cite{LD}, Liu-Deng proved that every homogeneous $(\alpha,
\beta)$-metric is a Douglas metric if and only if  it is a Berwald
metric or  a Douglas metric of Randers type. By the Numata theorem for
Berwald metrics and Theorem \ref{MainTHM1.5}, we get the proof.
\end{proof}

\bigskip

We deal with spherically symmetric Finsler metrics. A Finsler metric $F=F(x, y)$ on a domain $\Omega\subseteq \mathbb{R}^n$ is called spherically symmetric metric if it is invariant under any rotations in $\mathbb{R}^n$ (see \cite{TBS}). It is proved that a Finsler metric $F$ on a convex domain $\Omega\subseteq \mathbb{R}^n$ is spherically symmetric if and only if there exists a positive function $\phi:=\phi(r,u,v)$, such that $F(x,y)=\phi(|x|,|y|,\langle x,y\rangle)$, where 
\[
|x|=\sqrt{ \sum^n_{i=1}(x^i)^2},\ \ \ |y|=\sqrt{\sum^n_{i=1}(y^i)^2}, \ \ \ \langle x,y\rangle=\sum^n_{i=1}x^iy^i.
\]
Here, we prove the following.
\begin{cor}
Every homogeneous spherically symmetric Finsler metric with non-positive flag curvature and isotropic Berwald curvature is Riemannian or locally Minkowskian.
\end{cor}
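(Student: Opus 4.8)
The plan is to recognize this statement as a direct specialization of the isotropic Berwald rigidity already in hand, with the spherical symmetry serving only to single out a concrete homogeneous class. First I would recall from \cite{TR} that every Finsler metric of isotropic Berwald curvature has isotropic S-curvature; thus the hypothesis on ${\bf B}$ immediately supplies the S-curvature condition on which all of Section 3 rests. Since $F$ is assumed homogeneous and of non-positive flag curvature, the hypotheses of Corollary \ref{cor1} are then met verbatim, and one concludes that $F$ is Riemannian or locally Minkowskian. The spherical symmetry need not be exploited through a separate classification; it merely guarantees we are inside the homogeneous isotropic Berwald category to which Corollary \ref{cor1} applies.

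The substantive content lies in the dichotomy between the two conclusions, which I would organize by the sign of ${\bf K}$. Where ${\bf K}<0$, the argument underlying Theorem \ref{MainTHM1} applies: Lemma \ref{CLem} gives ${\bf J}=0$, and feeding this into \eqref{JIK*} forces $R^i_{\ m}I^mI_i=0$, whence the negativity of the flag curvature yields ${\bf I}=0$ and Deicke's theorem delivers a Riemannian metric. Where ${\bf K}=0$, I would instead invoke the Akbar-Zadeh remark following the proof of Theorem \ref{MainTHM1}, using that the Cartan and mean Cartan torsions of a homogeneous metric are bounded (Lemma \ref{lem2}) together with completeness (Lemma \ref{lem1}), to conclude that $F$ is locally Minkowskian. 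Splicing these two cases along the non-positive flag curvature hypothesis reproduces exactly the statement of Corollary \ref{cor1}, so the proof is essentially a citation of that corollary once the isotropic S-curvature reduction is made.

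I expect the main subtlety to be the transition between the \emph{non-positive} hypothesis assumed here and the \emph{negative} flag curvature demanded by Theorem \ref{MainTHM1} in dimension $n>2$: one must be careful that the locus where ${\bf K}=0$ is handled by the Akbar-Zadeh argument rather than by Theorem \ref{MainTHM1} itself. An alternative, heavier route would bypass Corollary \ref{cor1} by first classifying homogeneous spherically symmetric metrics of isotropic Berwald curvature into Berwald and Randers-of-Berwald-type pieces and then applying Theorem \ref{MainTHM1.5}; I would avoid this, since the reduction through isotropic S-curvature and Corollary \ref{cor1} is shorter and makes the spherical symmetry a contextual, rather than essential, hypothesis.
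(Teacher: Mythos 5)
Your proposal is correct in substance but follows a genuinely different route from the paper. The paper does \emph{not} go through Corollary \ref{cor1}: it first invokes the Guo--Liu--Mo classification \cite{GLM}, which says that a spherically symmetric metric of isotropic Berwald curvature must be a Randers metric, and then applies Theorem \ref{MainTHM1.5} (via Case (ii) of its proof: C-reducibility, ${\bf L}=0$ from Lemma \ref{CLem}, ${\bf E}=0$, and Crampin's theorem to land in the Berwald class). So for the authors the spherical symmetry is an essential hypothesis that buys membership in the $(\alpha,\beta)$-class, whereas you discard it and treat the statement as a special case of the homogeneous isotropic Berwald rigidity. Your route is shorter and exposes the fact that spherical symmetry is logically redundant here, but it inherits the tension you yourself flag: Corollary \ref{cor1} is deduced from Theorem \ref{MainTHM1}, which for $n>2$ demands \emph{strictly} negative flag curvature, while the statement only assumes ${\bf K}\le 0$. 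Your patch is the right idea but is phrased imprecisely: Akbar-Zadeh's theorem applies only when ${\bf K}\equiv 0$ globally, not on ``the locus where ${\bf K}=0$,'' so the case split must be the exhaustive global dichotomy ``${\bf K}<0$ at some point'' versus ``${\bf K}\equiv 0$.'' In the first case the pointwise argument via \eqref{JIK*} gives ${\bf I}=0$ at that point, and homogeneity (isometries preserve the mean Cartan torsion) propagates this to all of $M$ before Deicke's theorem is applied; this is exactly how the paper itself handles Case (i)b of Theorem \ref{MainTHM1.5}. With that rephrasing your argument closes, and it is essentially the same mechanism the paper uses for its other isotropic Berwald corollary in Section 4; the paper's detour through \cite{GLM} and the Randers/C-reducible machinery buys nothing extra for this particular statement beyond consistency with the $(\alpha,\beta)$-framework of Theorem \ref{MainTHM1.5}.
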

\begin{proof}
In \cite{GLM}, it is proved that every spherically symmetric Finsler metric $F=\phi(|x|,|y|,\langle x,y\rangle)$ on the $n$-ball $\mathbb{B}^n(r)$ with isotropic Berwald curvature is a Randers metric. By Theorem \ref{MainTHM1.5} we get the proof.
\end{proof}

\bigskip

\noindent
Behzad Najafi\\
Department of Mathematics and Computer Sciences\\
Amirkabir University (Tehran Polytechnic)\\
Hafez Ave.\\
Tehran. Iran\\
Email:\ behzad.najafi@aut.ac.ir

\bigskip

\noindent
Akbar Tayebi\\
Department of Mathematics, Faculty of Science\\
University of Qom \\
Qom. Iran\\
Email:\ akbar.tayebi@gmail.com

\end{document}